\numberwithin{equation}{section}
\newtheorem{theorem}{Theorem}[section]
\newtheorem*{assumption}{Assumption}
\newtheorem{lemma}[theorem]{Lemma}
\newtheorem{proposition}[theorem]{Proposition}
\newenvironment{proof}[1][Proof]{\noindent\textbf{#1.}\:}{\hfill $\square$}
\def\EE{\mathbb{E}}
\def\EQ{\mathbb{E}^{\mathbb{Q}}}
\def\Et{\mathbb{E}_{\mathcal{F}_t}}
\def\rm{\mathbb{R}^m}
\title{Quantitative stability and numerical analysis of Markovian quadratic BSDEs with reflection\thanks{
The authors thank the editor and both referees for their careful
reading and helpful comments.}}
\author{
Dingqian Sun\thanks{Department of Finance and Control Sciences,
School of Mathematical Sciences, Fudan University, Shanghai 200433,
China. Partially supported by China Scholarship Council.
\texttt{Email:dqsun14@fudan.edu.cn} } \and Gechun
Liang\thanks{Department of Statistics, University of Warwick,
Coventry, CV4 7AL, U.K. Partially supported by the National Natural
Science Foundation of China (No. 12171169) and Guangdong Basic and
Applied Basic Research Foundation (No. 2019A1515011338). GL thanks
J. F. Chassagneux and A. Richou for helpful and inspiring
discussions on how to extend to the state dependent volatility case.
\texttt{Email:g.liang@warwick.ac.uk}} \and Shanjian
Tang\thanks{Department of Finance and Control Sciences, School of
Mathematical Sciences, Fudan University, Shanghai 200433, China.
Partially supported by National Science Foundation of China (No.
11631004) and National Key R\&D Program of China (No.
2018YFA0703903). \texttt{Email:sjtang@fudan.edu.cn}}}
\date{}
\begin{document}
\maketitle

\begin{abstract}
We study the quantitative stability of the solutions to Markovian quadratic reflected BSDEs with bounded terminal data. By virtue of BMO
martingale and change of measure techniques, we obtain stability estimates for the variation of the solutions with different underlying forward processes. In addition, we propose a truncated discrete-time numerical scheme for quadratic reflected BSDEs, and obtain the
explicit rate of convergence by applying the quantitative stability result.\\

\noindent\textit{Keywords}: Quadratic BSDE with reflection, \and stability of solutions, \and discretely reflected BSDE, \and rate of
convergence\\
\end{abstract}

\section{Introduction}
In this paper, we are interested in the stability of the solutions to the following quadratic reflected backward stochastic differential
equations (BSDEs for short) under Markovian framework
\begin{align}\label{Y}
\begin{split}
&Y_t=g(X_T)+\int^T_t f(s,X_s,Y_s,Z_s)ds-\int^T_t Z_s dW_s +K_T-K_t, \\
&Y_t\geqslant g(X_t),\quad \int^T_0 (Y_t-g(X_t))dK_t=0,
\end{split}
\end{align}
where $T>0$ is a fixed finite time horizon and the underlying forward process solves
\begin{equation}\label{X}
X_t=x+\int^t_0 b(s,X_s)ds+\int^t_0 \sigma(s)dW_s,\quad t\in[0,T].
\end{equation}
Herein, $\{W_t\}_{t\geqslant 0}$ is an $m$-dimensional standard Brownian motion defined on a complete probability space
$(\Omega,\mathcal{F},\mathbb{P})$ and $\{\mathcal{F}_t\}_{t\geqslant 0}$ is the augmented natural filtration of $W$ which satisfies the usual
conditions. Let $\mathcal{P}$ denote the progressively measurable $\sigma$-field on $[0,T]\times\Omega$.

Throughout this paper, suppose that all the coefficients $b,\sigma, g$ and $f$ are deterministic and continuous functions and $b:
[0,T]\times\mathbb{R}^n\rightarrow\mathbb{R}^n$, $\sigma: [0,T]\rightarrow\mathbb{R}^{n\times m}$ satisfy, for all $ t\in[0,T]$ and $x,x^\prime \in \mathbb{R}^n$,
that,
\begin{equation}\tag{HX}
\begin{split}
\vert b(t,0)\vert+\vert \sigma(t)\vert & \leqslant L, \\
\vert b(t,x)-b(t,x^{\prime})\vert & \leqslant L\vert x-x^{\prime}\vert,
\end{split}
\end{equation}
for a positive constant $L$. We also assume that $g: \mathbb{R}^n\rightarrow\mathbb{R}$ satisfies Lipschitz condition $\vert
g(x)-g(x^{\prime})\vert \leqslant L\vert x-x^{\prime}\vert$ for all $x,x^\prime \in \mathbb{R}^n$ and is bounded by $M_g$, while $f:
[0,T]\times\mathbb{R}^n\times\mathbb{R}\times\mathbb{R}^{n\times m}\rightarrow\mathbb{R}$ is Lipschitz with respect to $y$ and locally Lipschitz with respect to
both $x$ and $z$, and has at most quadratic growth with respect to $z$, i.e., for any $t\in[0,T]$ and $ (x,y,z),(x^\prime,y^\prime,z^\prime) \in
\mathbb{R}^n\times \mathbb{R}\times\mathbb{R}^{n\times m}$,
\begin{equation}\tag{HF}
\begin{split}
\vert f(t,x,y,z)\vert &\leqslant M_f(1+\vert y\vert)+\frac{\alpha}{2}\vert z\vert^2,\\
\vert f(t,x,y,z)-f(t,x^{\prime},y,z)\vert & \leqslant L(1+\vert z\vert)\vert x-x^{\prime}\vert,  \\
\vert f(t,x,y,z)-f(t,x,y^{\prime},z)\vert & \leqslant L\vert y-y^{\prime}\vert,  \\
\vert f(t,x,y,z)-f(t,x,y,z^{\prime})\vert & \leqslant L(1+\vert z\vert+\vert z^{\prime}\vert)\vert z-z^{\prime}\vert,
\end{split}
\end{equation}
where $M_g$, $M_f$ and $\alpha$ are all positive constants.

Thanks to the seminal work \cite{KobyBSDE}, \cite{PB1} and \cite{PB}, the existence and uniqueness of the solution for the corresponding quadratic BSDEs (without reflection) has been well developed. The reflected case was studied in \cite{Koby} with bounded terminal value and obstacle, and \cite{ErhanSong}, \cite{LX} for
unbounded cases. In addition to the existence and uniqueness of the solution, the stability is also an important property that focuses on the variation of the solutions under small perturbations of the coefficients. It is widely used to obtain continuity properties of the solutions. In this paper, we will apply it to the numerical analysis of quadratic reflected BSDEs.

Under the Lipschitz setting, a basic stability result has been developed in \cite[Proposition 3.6]{El}, which gives the variation of the
solutions in terms of the suitable norms of their terminal values, generators and obstacles. Based on this result, \cite{Ma} study the
$L^2$-modulus regularity of the martingale integrand $Z$ via a Feynman–Kac type formula and give both the numerical scheme in the spirit of
Bermuda options and its rate of convergence. \cite{BBJC} further apply the stability result to approximate $(Y, Z)$ by its counterpart
$(Y^e, Z^e)$ constructed with the Euler scheme $X^\pi$ of (\ref{X}) and retrieve the convergence with the aid of a representation of the solution component $Z$
in terms of the next reflection time, removing the uniform ellipticity condition on $X$ in \cite{Ma}.


However, the counterpart of \cite[Proposition 3.6]{El} under
the quadratic setting is still lacking. The existing stability results focus on the continuity of the solutions. For example,
in \cite{KobyBSDE} (without reflection) and \cite{Koby} (with reflection), the authors show the uniform convergence of the solutions $(Y^n,Z^n)$ with
parameters $(g^n,f^n)$ to the solution $(Y,Z)$ with parameters $(g,f)$ when the obstacles $g^n$ and
generators $f^n$ uniformly converge to $g$ and $f$, by means of the
comparison theorem, monotone property and Lebesgue's theorem. However, the above continuity result does not say anything about the
quantitative dependence of the variation of the solutions on those parameters, which will play a pivotal role in the numerical analysis of quadratic BSDEs with reflection.

The main purpose of this paper is therefore to give, for the first time, \emph{a quantitative stability result on the solutions of the Markovian quadratic BSDE with
reflection (\ref{Y}) and apply this new stability result to establish the convergence of a truncated discrete-time numerical scheme for (\ref{Y})}. Proceeding under
Markovian framework, we shall mainly focus on the perturbations of the parameters in the forward process (\ref{X}) and study the variation of
the solutions $(Y,Z)$ to the quadratic reflected BSDE (\ref{Y}) driven by different forward processes.

Due to the quadratic growth condition, we will work with bounded terminal data in order to further take advantage of the properties of BMO
martingales, the latter of which is used ubiquitously in the numerical analysis of quadratic BSDEs without reflection, see \cite{JCAR},
\cite{IDR} and \cite{Richou2011} for example.
To be more specific, we first obtain some fundamental properties of the solution to the quadratic BSDE with reflection (\ref{Y}), that is, the
BMO property of the martingale integrand $Z*W$ and the $L^p$-integrability of $\int^T_0 |Z_t|^2 dt$ and $K_T$. Next, working under a new
equivalent probability measure induced by $Z*W$, we utilize the reverse H\"{o}lder inequality to obtain the estimate of the variation of the solution component $Y$
for any order in terms of the difference of underlying forward processes, followed by the estimates on the solution components $(Z,K)$ equipped with
appropriate norms. Finally, transferring back via John-Nirenberg inequality, we obtain the explicit dependence of the variation of the
solutions under the original probability measure. See Theorem 3.2 for further details.

Furthermore, we apply the stability result to the numerical analysis of quadratic reflected BSDEs. In contrast to quadratic BSDEs without reflection and Lipschitz BSDEs with reflection, where
the solution component $Z$ is typically bounded in Markovian setup, \emph{the solution component $Z$ for the quadratic reflected BSDE (1.1)-(1.2) is not necessarily bounded}. This is the major difficulty to
propose a numerical scheme and study its convergence. To overcome this difficulty, we reply on the discretely reflected BSDE (\ref{Y^R}) introduced in Section 4.
Thanks to the previous work \cite{DQ}, we can readily extend the results therein to obtain a uniform estimate of
$Z^\mathcal{R}$, the second component of the solution to the discretely reflected BSDE (\ref{Y^R}), and the convergence rate from the
discretely to continuously reflected BSDEs. In turn, we truncate the generator via the bound of $Z^\mathcal{R}$ and propose a truncated
discrete-time numerical scheme. This enables us to directly apply the existing numerical result under the Lipschitz setting (see \cite{BBJC}) to get the
approximation error for the discretely reflected BSDE with quadratic growth. However, when extending the estimates to the continuously
reflected case, a troublesome term $\kappa |\pi|$ appears and it will degenerate to a constant when to get the overall convergence rate (see Lemma \ref{Quaresult}). To overcome this difficulty, we introduce
$Z^{\mathcal{R},e}$ as defined in (\ref{Y^Re}), which is based on the Euler scheme for $X^\pi$, the same forward process as in our
discrete-time numerical scheme. The price to pay is that one needs to estimate an additional error between the solutions $(Y,Z)$ and $(Y^e,Z^e)$ of the
continuously reflected BSDEs driven by $X$ and $X^\pi$ respectively. It turns out this error can be controlled by applying the quantitative stability estimate. See Theorem 4.3 for further details. \\

The rest of this paper is organized as follows. In Section 2, we obtain some useful properties of the solution to the quadratic reflected BSDE
with bounded terminal value. The quantitative stability result under Markovian framework is derived in Section 3, with the assistance of powerful
techniques from BMO martingales. In the following section, we propose a truncated discrete-time numerical scheme for the quadratic reflected BSDE and
apply the stability result to obtain  a convergence rate for such a discrete-time approximation. Section 5 then concludes.

\section{Preliminaries}

In this section, we introduce the notations of different spaces and recall some known results on quadratic reflected BSDEs with bounded terminal data.

Without loss of generality, we assume that the forward process $X$ has dimension $n=1$ in the rest of the paper. Note that this is merely for the sake of notational simplicity. Let $\mathbb{S}^\infty[0,T]$ denote the set of $\mathbb{R}$-valued progressively measurable bounded processes, and
$\mathbb{K}^2[0,T]$ denote all the $\mathbb{R}$-valued continuous adapted processes $(K_t)_{0\leqslant t\leqslant T}$, which are increasing
with $K_0$=0 and $\EE\vert K_T\vert^2<\infty$. For $1\leqslant p<\infty$, $\mathbb{S}^p[0,T]$ denotes all $\mathbb{R}$-valued adapted
processes $(Y_t)_{0\leqslant t\leqslant T}$ such that $\Vert Y\Vert^p_{\mathbb{S}^p}:=\EE(\sup_{0\leqslant t\leqslant T} \vert Y_t\vert^p)<\infty$, and $\mathbb{H}^{p}([0,T];\rm)$ denotes all $\rm$-valued adapted processes $(Z_t)_{0\leqslant t\leqslant T}$ satisfying $\Vert Z\Vert^p_{\mathbb{H}^p}:=
\EE[\int^{T}_0 \vert Z_t
\vert^2_{\rm}dt]^{p/2}<\infty$. Moreover, $\mathbb{L}^p(\mathcal{F}_t)$ denotes all $\mathbb{R}$-valued $\mathcal{F}_t$-measurable variables satisfying $\Vert Y\Vert^p_{\mathbb{L}^p}:=\EE\vert Y_t\vert^p<\infty$ for any $t\in[0,T]$ and we usually omit $(\mathcal{F}_t)$ hereafter in case there is no ambiguity.

Under the above assumptions (HX) and (HF), we know the decoupled system (\ref{Y}) and (\ref{X}) with bounded terminal function and bounded obstacle has a
unique solution $(X,Y,Z,K)\in \mathbb{S}^2[0,T]\times\mathbb{S}^\infty[0,T]\times\mathbb{H}^2([0,T];\rm)\times\mathbb{K}^2[0,T]$, and
furthermore we denote $\Vert Y\Vert_\infty \triangleq M$. For more details of this result, we refer to \cite{Koby}. In the following, unless
otherwise specified, we shall use $C$ to denote the universal constant that may depend on all the given coefficients $L, T, M_g, M_f$ and
$\alpha$, and $C_p$ further depends on an extra parameter $p\geqslant 1$.

Next, we recall the definition and some basic properties of BMO martingales, which provide the techniques for our following study. For the
detailed theory, we refer the reader to \cite{Kaza}. We say a continuous local martingale $(M_t)_{t\in[0,T]}$ is a BMO martingale if it is
square-integrable with $M_0=0$ such that
$$\Vert M\Vert^2_{BMO}:=\sup_{\tau\in\mathcal{T}[0,T]}\Vert\EE[\langle M\rangle_T-\langle M\rangle_\tau|{\mathcal{F}_\tau}]\Vert_{\infty}< \infty,$$
where $\mathcal{T}[0,T]$ is the set of all stopping times valued in $[0,T]$.

\begin{lemma}\label{BMO}
Let $M$ be a BMO martingale. Then, we have:\\
$\mathrm{1)}$ The stochastic exponential
$$\mathcal{E}(M)_t:=\mathrm{exp}
\Big(M_t-\frac{1}{2}\langle M\rangle_t\Big), t\in[0,T],$$
is a uniformly integrable martingale.\\
$\mathrm{2)}$ The energy inequality gives that
$$\EE[\langle M\rangle^n_T]\leqslant n!\Vert M \Vert^{2n}_{BMO}$$
for all $n\in \mathbb{N}^+$, which implies that $BMO\subset \mathbb{H}^p([0,T])$ for every $p\geqslant 1$.\\
$\mathrm{3)}$ According to reverse H\"{o}lder inequality, there exists some $p>1$ such that
$$\EE[\mathcal{E}(M)_T^p]\leqslant C_p,$$
where $C_p$ is a constant only depending on $p$ and the BMO norm of $M$. Moreover, the maximum $p$ satisfying such property can be explicitly
determined by the BMO norm of $M$ through a decreasing function, see more details in $\mathrm{\cite[Theorem 3.1]{Kaza}}$.\\
$\mathrm{4)}$ By John-Nirenberg inequality, we have
$$\EE\Big[\mathcal{E}(M)_T^{-\frac{1}{p-1}}\Big]\leqslant C_p$$
for all $p>1$ satisfying $\Vert M\Vert_{BMO}<\sqrt{2}(\sqrt{p}-1)$, see $\mathrm{\cite[Theorem 2.4]{Kaza}}$.
\end{lemma}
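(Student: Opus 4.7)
The plan is to derive each of the four parts from the classical theory of BMO martingales as developed in Kazamaki's monograph; the lemma is essentially a compilation of standard tools, and rather than reproving these results from scratch the approach is to invoke the cited theorems while highlighting the main ideas behind each.

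For part 2, induction on $n$ is the natural route. Starting from the pathwise identity $(\langle M\rangle_T - \langle M\rangle_\tau)^n = n\int_\tau^T (\langle M\rangle_T - \langle M\rangle_s)^{n-1}\, d\langle M\rangle_s$, take the conditional expectation given $\mathcal{F}_\tau$, plug in the inductive bound $\mathbb{E}[(\langle M\rangle_T - \langle M\rangle_s)^{n-1}\mid\mathcal{F}_s] \leq (n-1)!\,\|M\|^{2(n-1)}_{BMO}$ inside the integrand via the tower property, and apply the base BMO inequality once more; the containment $BMO \subset \mathbb{H}^p$ then follows by Burkholder--Davis--Gundy.

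For parts 1, 3, and 4 the plan is to defer to Kazamaki. Part 3 (the reverse H\"older inequality) is the core estimate: one applies It\^o's formula to $\mathcal{E}(M)^p$ and uses the BMO property together with the energy inequality from part 2 to close a Gr\"onwall-type bound, which determines the critical exponent as a decreasing function of $\|M\|_{BMO}$. Part 1 follows directly, since an $L^p$-bound on $\mathcal{E}(M)_T$ for some $p>1$ upgrades the positive local martingale $\mathcal{E}(M)$ to a uniformly integrable martingale on $[0,T]$. Part 4 follows from John--Nirenberg for BMO martingales: writing $\mathcal{E}(M)_T^{-1/(p-1)} = \exp\bigl(-\tfrac{1}{p-1}M_T + \tfrac{1}{2(p-1)}\langle M\rangle_T\bigr)$, one separates off the exponential martingale factor by Cauchy--Schwarz and controls the remaining exponential of $\langle M\rangle_T$ via the quantitative John--Nirenberg estimate under the hypothesis $\|M\|_{BMO} < \sqrt{2}(\sqrt{p}-1)$.

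The main obstacle is not any single computation but the quantitative tracking of how the critical exponent in part 3 and the constant in part 4 depend on $\|M\|_{BMO}$, since these explicit dependences are exactly what the stability analysis of the quadratic reflected BSDE will later rely on when the BMO norm of $Z*W$ must be controlled uniformly across a family of perturbed forward processes.
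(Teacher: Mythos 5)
The paper gives no proof of this lemma at all---it is a compilation of standard BMO facts cited directly from Kazamaki---and your proposal takes essentially the same route, deferring to the cited theorems and supplying the textbook induction for the energy inequality, which is correct. One small caution: in part 1, an $L^p$-bound on $\mathcal{E}(M)_T$ alone does not upgrade a positive local martingale to a uniformly integrable martingale (strict local martingales can have $p$-integrable terminal values); you need the reverse H\"{o}lder inequality in its conditional form at stopping times, applied along a localizing sequence (or Kazamaki's Theorem 2.3 directly), which is exactly what the cited result provides.
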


With the powerful tools at hand, we claim the following properties about the solutions to quadratic reflected BSDE (\ref{Y}).

\begin{proposition}\label{Z-BMO}
Suppose Assumptions $\mathrm{(HX)}$ and $\mathrm{(HF)}$ hold, and let $(X,Y,Z,K)$ be the solution of system (\ref{Y}) and (\ref{X}). Then, the stochastic integral
$Z*W:=\Big(\int^t_0 Z_sdW_s\Big)_{t\in[0,T]}$ is a BMO martingale with the BMO norm satisfying
\begin{equation}\label{BMO_norm}
\Vert Z*W\Vert^2_{BMO}\leq \frac{\exp(4\alpha M)}{\alpha^2}   [1+2\alpha M_f(1+M) T].
\end{equation}
\end{proposition}

\begin{proof} Denote $\Vert Y\Vert_\infty \triangleq M$.
Making exponential change of variable $\eta_t:=e^{-\alpha Y_t}$, we
are led to the following reflected BSDE with an upper obstacle
\begin{equation}
\eta_t=\theta_T+\int^T_t F(s,\eta_s,\Lambda_s)ds-\int^T_t \Lambda_s dW_s -(J_T-J_t),\quad t\in[0,T]
\end{equation}
with $\theta_t=e^{-\alpha g(X_t)}, \Lambda_t=-\alpha e^{-\alpha Y_t} Z_t, dJ_t=\alpha e^{-\alpha Y_t} dK_t$ and stochastic coefficient
$$F(t,\omega,y,z)=-\alpha y\Big[f\Big(t,X_t(\omega),\frac{\ln y}{-\alpha},\frac{z}{-\alpha y}\Big)+\frac{\alpha}{2}\Big|\frac{z}{-\alpha
y}\Big|^2\Big] \mathbbm{1}_{\{y \geqslant e^{-\alpha M}\}},$$ which
satisfy $\eta_t \leqslant \theta_t$ and
$$\int^T_0(\theta_t-\eta_t)dJ_t=0.$$ Moreover, by the boundedness of
$Y$ and Assumption $\mathrm{(HF)}$, we have
$$-\alpha M_f(1+M) y-e^{\alpha M}|z|^2 \leqslant
F(t,\omega,y,z)\leqslant\alpha M_f(1+M)  y,$$ and thus
$(\eta,\Lambda, J)\in
\mathbb{S}^\infty[0,T]\times\mathbb{H}^2([0,T];\rm)\times\mathbb{K}^2[0,T]$
with $e^{-\alpha M}\leqslant\eta_t\leqslant e^{\alpha M}$ for all
$t\in[0,T]$.\\

Applying It\^{o}'s formula to $|\eta_t|^2$ gives that
\begin{equation}
|\eta_t|^2=|\eta_T|^2+\int^T_t 2\eta_s F(s,\eta_s,\Lambda_s)ds-\int^T_t 2\eta_s\Lambda_s dW_s-\int^T_t 2\eta_sdJ_s-\int^T_t |\Lambda_s|^2ds.
\end{equation}
Since $dJ_t\geqslant 0,\eta_t >0$ and $\Lambda \in\mathbb{H}^2([0,T];\rm)$, we have
\begin{align*}
\begin{split}
|\eta_t|^2+\Et\Big[\int^T_t |\Lambda_s|^2ds\Big]&
\leqslant\Et|\eta_T|^2+\Et\Big[\int^T_t 2\eta_s F(s,\eta_s,\Lambda_s)ds\Big]\\
&\leqslant \Et|\eta_T|^2+2\alpha M_f(1+M)\Et\Big[\int^T_t |\eta_s|^2ds\Big]\\
&\leqslant [1+2\alpha M_f (1+M)T]\exp(2\alpha M).
\end{split}
\end{align*}
Thus,
\begin{align}
\begin{split}
\Et\Big[\int^T_t |Z_s|^2ds\Big]&=\Et\Big[\int^T_t \Big|\frac{\Lambda_s}{-\alpha \eta_s}\Big|^2ds\Big]\leqslant \frac{\exp(2\alpha
M)}{\alpha^2}  \Et\Big[\int^T_t |\Lambda_s|^2ds\Big]\\
&\leqslant\frac{\exp(4\alpha M)}{\alpha^2}   [1+2\alpha M_f(1+M) T],\quad \forall t\in[0,T],
\end{split}
\end{align}
and one can easily get the conclusion by the definition of BMO martingales.
\end{proof}\\

The above proposition implies that the BMO norm of $Z*W$ depends only on $\alpha, M_f, M_g$, $M$ and $T$. Furthermore, we have the following
$L^p$-integrability of $\int^T_0 |Z_t|^2 dt$ and $K_T$.

\begin{proposition}\label{L^p}
Suppose Assumptions $\mathrm{(HX)}$ and $\mathrm{(HF)}$ hold, and let $(X,Y,Z,K)$ be the solution of system (\ref{Y}) and (\ref{X}). Then, for
any $p\geqslant 1$,
\begin{equation*}
\EE\Big[ \Big(\int^T_0 |Z_t|^2 dt\Big)^p+(K_T)^p\Big]\leqslant C_p.
\end{equation*}
\end{proposition}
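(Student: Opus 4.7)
The plan is to handle the two pieces separately, with Proposition \ref{Z-BMO} doing the heavy lifting for the $Z$-part and the BSDE itself providing the representation needed for the $K$-part.

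\textbf{Step 1 (the $Z$ estimate).} Since Proposition \ref{Z-BMO} gives that $Z*W$ is a BMO martingale, I would apply part 2) of Lemma \ref{BMO} (the energy inequality) directly to $M = Z*W$. This yields
\begin{equation*}
\EE\Big[\Big(\int_0^T |Z_t|^2 dt\Big)^n\Big] = \EE[\langle Z*W\rangle_T^n] \leqslant n!\,\|Z*W\|_{BMO}^{2n}
\end{equation*}
for every positive integer $n$. For non-integer $p\geqslant 1$, apply Jensen's or Lyapunov's inequality to pass from the ceiling $\lceil p\rceil$ back down to $p$. Since the BMO norm of $Z*W$ was shown in Proposition \ref{Z-BMO} to depend only on $\alpha, M_f, M_g, M, T$, this gives the required bound by $C_p$.

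\textbf{Step 2 (the $K$ estimate).} Evaluate (\ref{Y}) at $t=0$ to solve for $K_T$:
\begin{equation*}
K_T = Y_0 - g(X_T) - \int_0^T f(s,X_s,Y_s,Z_s)\,ds + \int_0^T Z_s\,dW_s.
\end{equation*}
Here $|Y_0|\leqslant M$ and $|g(X_T)|\leqslant M_g$ are deterministic bounds, and the quadratic growth condition in (HF) gives
\begin{equation*}
\Big|\int_0^T f(s,X_s,Y_s,Z_s)\,ds\Big| \leqslant M_f(1+M)T + \frac{\alpha}{2}\int_0^T |Z_s|^2\,ds.
\end{equation*}
Therefore
\begin{equation*}
|K_T|^p \leqslant C_p\Big(1 + \Big(\int_0^T |Z_s|^2\,ds\Big)^p + \Big|\int_0^T Z_s\,dW_s\Big|^p\Big).
\end{equation*}
Taking expectations, the middle term is controlled by Step 1, and the stochastic integral is handled by the Burkholder--Davis--Gundy inequality followed by Step 1 again:
\begin{equation*}
\EE\Big|\int_0^T Z_s\,dW_s\Big|^p \leqslant C_p\,\EE\Big[\Big(\int_0^T |Z_s|^2\,ds\Big)^{p/2}\Big] \leqslant C_p.
\end{equation*}

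\textbf{What I expect to be routine versus hard.} There is no serious obstacle here: both bounds are essentially corollaries of the BMO property established in Proposition \ref{Z-BMO}, combined with boundedness of $Y$ and $g$ and the quadratic growth of $f$. The only mild care needed is to absorb the $\frac{\alpha}{2}|Z|^2$ contribution from $f$ into the $Z$-integral estimate and to invoke BDG for the martingale part; everything else is bookkeeping through Lemma \ref{BMO}.
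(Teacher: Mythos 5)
Your proposal is correct and follows essentially the same route as the paper: the $Z$-bound via the BMO property from Proposition \ref{Z-BMO} and the energy inequality of Lemma \ref{BMO}, and the $K$-bound by solving the reflected equation for $K_T$, using the quadratic growth of $f$ and boundedness of $Y$ and $g$, and then BDG plus the $Z$-estimate. Your explicit remark on interpolating to non-integer $p$ is a harmless extra detail the paper leaves implicit.
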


\begin{proof}
It is clear from Proposition \ref{Z-BMO} and assertion 2) of Lemma \ref{BMO} to obtain the result of $Z$ part. For the $K$ part, rewrite the
reflected equation as
\begin{align}\label{K}
\begin{split}
K_T&=K_0+Y_0-g(X_T)-\int^T_0 f(s,X_s,Y_s,Z_s)ds+\int^T_0 Z_s dW_s\\
&\leqslant M+M_g+\int^T_0 |f(s,X_s,Y_s,Z_s)|ds+\Big|\int^T_0 Z_s dW_s\Big|\\
&\leqslant M+M_g+M_f T(1+\Vert Y\Vert_\infty)+\frac{\alpha}{2}\int^T_0 |Z_s|^2ds+\Big|\int^T_0 Z_s dW_s\Big|,
\end{split}
\end{align}
where the last line follows from the assumptions of $f$ and $g$. Thus, by Burkholder-Davis-Gundy’s inequality and the conclusion of $Z$ part, we obtain
\begin{align}
\begin{split}
\EE|K_T|^p &\leqslant C_p \Big(1+\EE \Big(\int^T_0 |Z_s|^2 ds\Big)^p+
\EE\Big|\int^T_0 Z_s dW_s\Big|^p\Big)\\
&\leqslant C_p \Big(1+\EE \Big(\int^T_0 |Z_s|^2 ds\Big)^p+\EE \Big(\int^T_0 |Z_s|^2 ds\Big)^{p/2}\Big)\leqslant C_p.
\end{split}
\end{align}
\end{proof}

\section{Main stability result}
Now we are ready to deal with the variation of the solutions to quadratic reflected BSDEs driven by different forward processes. Suppose that
$X^j$ solves
\begin{equation*}
X^j_t=x+\int^t_0 b_j(s,X^j_s)ds+\int^t_0 \sigma_j(s)dW_s,\quad t\in[0,T]
\end{equation*}
for $j=1,2$, where $(b_j,\sigma_j)$ satisfies Assumption (HX), then we know both $X^1$ and $X^2$ are in $\mathbb{S}^2[0,T]$. Given the
parameters $f$ and $g$, let us denote the solutions to the quadratic reflected BSDE (\ref{Y}) driven by $X^1$ and $X^2$ as $(Y^1,Z^1,K^1)$ and
$(Y^2,Z^2,K^2)$ respectively, which belong to $\mathbb{S}^\infty[0,T]\times\mathbb{H}^2([0,T];\rm)\times\mathbb{K}^2[0,T]$ and satisfy $\Vert
Y^1\Vert_\infty\vee\Vert Y^2\Vert_\infty \leqslant M$. We further denote $\delta X= X^1-X^2$, $\delta Y= Y^1-Y^2$, $\delta Z= Z^1-Z^2$ and
$\delta K= K^1-K^2$, and have the following expression
\begin{align}\label{deltaY}
\begin{split}
\delta Y_t
&=\delta Y_T+\int^T_t  f(s,X^1_s,Y^1_s,Z^1_s)-f(s,X^2_s,Y^2_s,Z^2_s)ds-\int^T_t  \delta Z_s dW_s+\int^T_t d\delta K_s\\
&=\delta Y_T+\int^T_t ( \gamma_s \delta X_s+\beta_s\delta Y_s+ \mu_s \delta Z_s )ds-\int^T_t  \delta Z_s dW_s+\int^T_t d\delta K_s,
\end{split}
\end{align}
where
$$\gamma_s :=\frac{f(s,X^1_s,Y^1_s,Z^1_s)-f(s,X^2_s,Y^1_s,Z^1_s)}{X^1_s-X^2_s}\mathbbm{1}_{\{\delta X_s\neq 0\}},$$
$$\beta_s :=\frac{f(s,X^2_s,Y^1_s,Z^1_s)-f(s,X^2_s,Y^2_s,Z^1_s)}{Y^1_s-Y^2_s}\mathbbm{1}_{\{\delta Y_s\neq 0\}},$$
and
$$\mu_s :=\frac{f(s,X^2_s,Y^2_s,Z^1_s)-f(s,X^2_s,Y^2_s,Z^2_s)}{|Z^1_s-Z^2_s|^2}(Z^1_s-Z^2_s)^T\mathbbm{1}_{\{\vert \delta Z_s\vert \neq
0\}}.$$

By the locally Lipschitz assumption of $f$, we have
$$|\gamma_s |\leqslant L(1+|Z^1_s|),\quad |\beta_s |\leqslant L,\quad |\mu_s |\leqslant L(1+|Z^1_s|+|Z^2_s|),\quad \forall s\in[0,T],$$
which further imply that $\int^T_0 (|\gamma_s|^2+|\mu_s|^2) ds$ is $L^p$-integrable for any $p\geqslant 1$ by Proposition \ref{L^p} and that
$\mu*W$ is a BMO martingale by Proposition \ref{Z-BMO}.

Regarding the difficulty caused by the quadratic growth in the $Z$ part, the BMO property of $\mu*W$ enables us to proceed under a new
equivalent probability measure $\mathbb{Q}$ defined as
$$\frac{d\mathbb{Q}}{d\mathbb{P}}:=\mathcal{E}(\mu*W)_T,$$
under which $W^\mathbb{Q}_t=W_t-\int^t_0 \mu_s ds, \ t\in[0,T]$, is a standard Brownian motion.

Moreover, since
$$\Vert\mu*W\Vert_{BMO}\leqslant L(1+\Vert Z^1 *W\Vert_{BMO}+\Vert Z^2*W\Vert_{BMO}),$$
we know from assertion 3) of Lemma \ref{BMO} that there exists some $p^*>1$ which can be determined by the BMO norm of $\mu*W$, such that
$\mathcal{E}(\mu *W)_T$ is $L^{p^*}$-integrable, i.e., $\EE[\mathcal{E}(\mu *W)^{p^*}_T]\leqslant C_{p^*}$. Thus, $C_{p^*}$ only depends on
the BMO norm of $\mu*W$, which essentially relies on the given coefficients $L, \alpha, M_f, M_g$ and $T$, and we may just write it as the
universal constant $C$ hereafter.
Next, we will give the $L^{p}$-estimate of the difference of solutions under such a new probability measure in the following proposition.

\begin{proposition}\label{EQdeltaY}
For any $p>1$ and $\Vert  \delta X \Vert_{\mathbb{S}^{4pq^*}}\leq
1$,
\begin{equation}\label{EQdeltaYZK}
\EQ\Big[\sup_{t\in[0,T]} |\delta Y_t|^{2p}
+\Big(\int^T_0 |\delta Z_t|^2dt\Big)^p
+|\delta K_T|^{2p}\Big]\leqslant C_p\Vert  \delta X \Vert^{p}_{\mathbb{S}^{4pq^*}},
\end{equation}
where $q^*$ is the conjugate exponent of $p^*$, $i.e. \frac{1}{q^*}+\frac{1}{p^*}=1$, and the parameter $p^*$ is determined by the BMO norm of $\mu*W$ (see assertion $\mathrm{3)}$ of Lemma \ref{BMO}).
\end{proposition}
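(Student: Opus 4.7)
The plan is to work under the equivalent measure $\mathbb{Q}$, under which the $\mu_s \delta Z_s$ term in the linearized equation (\ref{deltaY}) is absorbed into the Brownian motion, reducing the difference equation to a ``linear'' reflected BSDE with source $\gamma_s \delta X_s + \beta_s \delta Y_s$ containing no $\delta Z$. The $\delta Y$-estimate would then follow from applying Ito's formula to $|\delta Y_t|^{2p}$ under $\mathbb{Q}$. The critical ingredient for handling the reflection term is the Skorohod condition: on $\{dK^1_s > 0\}$ one has $Y^1_s = g(X^1_s)$, and combined with the obstacle inequality $Y^2_s \geqslant g(X^2_s)$ this gives $\delta Y_s \leqslant g(X^1_s) - g(X^2_s) \leqslant L|\delta X_s|$; by symmetry $-\delta Y_s \leqslant L|\delta X_s|$ on $\{dK^2_s > 0\}$. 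Multiplying by $|\delta Y_s|^{2p-2}$ one obtains
\[
\int_0^T |\delta Y_s|^{2p-2}\delta Y_s\, d\delta K_s \leqslant L \int_0^T |\delta Y_s|^{2p-2} |\delta X_s|\,(dK^1_s + dK^2_s).
\]

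From the Ito expansion under $\mathbb{Q}$, together with $|\delta Y_T|\leqslant L|\delta X_T|$, Burkholder-Davis-Gundy for the $dW^\mathbb{Q}$-martingale, Young's inequality to absorb a fraction of $\sup|\delta Y|^{2p}$ onto the left-hand side, and Gronwall to handle the $\int|\delta Y|^{2p}\,ds$ contribution of $\beta$, one should arrive at a bound of the shape
\[
\EQ\Big[\sup_{t}|\delta Y_t|^{2p}\Big] \leqslant C_p\,\EQ\Big[\sup_{t}|\delta X_t|^{2p}\Big(1+\int_0^T(1+|Z^1_s|)\,ds+K^1_T+K^2_T\Big)^{2p}\Big].
\]
A Cauchy-Schwarz in $\mathbb{Q}$ then separates $\sup|\delta X|^{4p}$ from the auxiliary factor, and the change-of-measure inequality $\EQ[\xi]\leqslant (\EE[\mathcal{E}(\mu*W)_T^{p^*}])^{1/p^*}(\EE[|\xi|^{q^*}])^{1/q^*}$ moves everything back under $\mathbb{P}$: the first factor is bounded by a constant via assertion 3) of Lemma \ref{BMO}, and the $L^{4pq^*}(\mathbb{P})$-moments of $\int(1+|Z^1|)\,ds$ and $K^j_T$ are finite by Propositions \ref{Z-BMO} and \ref{L^p}. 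What remains on the $\delta X$ side is $(\EE[\sup|\delta X|^{4pq^*}])^{1/(2q^*)}=\|\delta X\|^{2p}_{\mathbb{S}^{4pq^*}}$, which is dominated by $\|\delta X\|^{p}_{\mathbb{S}^{4pq^*}}$ thanks to the uniform a priori bound $\|\delta X\|_{\mathbb{S}^{4pq^*}}\leqslant C$ available from Assumption (HX).

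For $\delta Z$, I would first take $p=1$ in the Ito identity for $|\delta Y|^2$ to extract $\EQ[\int_0^T|\delta Z_s|^2\,ds]$ in terms of the reflection integral, the $\gamma$ and $\beta$ integrals, and the boundary; then, to pass to general $p$, I would raise the identity to the $p$-th power and apply B-D-G together with the previously obtained $\delta Y$ bound and the reflection estimate above. The $\delta K$ bound is read off directly from the $\mathbb{Q}$-form of the equation,
\[
\delta K_T = \delta Y_0 - \delta Y_T - \int_0^T(\gamma_s\delta X_s + \beta_s\delta Y_s)\,ds + \int_0^T\delta Z_s\,dW^\mathbb{Q}_s,
\]
and is controlled via the $\delta Y$ and $\delta Z$ estimates and B-D-G. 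The main obstacle will be the careful bookkeeping of nested Holder and Young exponents so that every factor besides $\delta X$ has a finite moment of the required order: each Cauchy-Schwarz in $\mathbb{Q}$ doubles the exponent on $\sup|\delta X|$ (producing the $4p$), the $\mathbb{Q}\to\mathbb{P}$ Holder introduces the $q^*$ (producing the $\mathbb{S}^{4pq^*}$ norm), and one must verify that Proposition \ref{L^p} and the BMO theory indeed furnish the auxiliary moment bounds. It is the reflection term, which forces the separation of $\delta X$ from $K^1+K^2$ via Cauchy-Schwarz, that ultimately dictates this specific form of the right-hand side.
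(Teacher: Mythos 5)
Your proposal follows essentially the same route as the paper's three-step proof: linearize, change measure using the BMO property of $\mu*W$, exploit the Skorohod condition to get $\delta Y_s\,d\delta K_s\leqslant L|\delta X_s|\,d(K^1_s+K^2_s)$, run It\^o/B-D-G/Young/Gronwall on $|\delta Y_t|^{2p}$ under $\mathbb{Q}$ (with the a priori boundedness of $\delta Y$ justifying the absorption), treat $\delta Z$ via It\^o on $|\delta Y|^2$ raised to the $p$-th power and $\delta K$ from the equation itself, and return to $\mathbb{P}$ via reverse H\"older together with the $L^p$-moments of $\int_0^T|Z^i_s|^2ds$ and $K^i_T$. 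The only slip is cosmetic: Young's inequality gives the reflection contribution as $\sup_t|\delta X_t|^{p}(K^1_T+K^2_T)^{p}$ (first power of $|\delta X|$ against $dK$), so your bundled intermediate bound with $\sup_t|\delta X_t|^{2p}(1+\int(1+|Z^1|)ds+K^1_T+K^2_T)^{2p}$ overstates that piece; keeping the $\gamma$-term and the reflection term separate, as the paper does, still yields $C_p\Vert\delta X\Vert^{p}_{\mathbb{S}^{4pq^*}}$ after Cauchy--Schwarz, the change of measure, and the uniform bound on $\Vert\delta X\Vert_{\mathbb{S}^{4pq^*}}$.
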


\begin{proof}
To prove this result, we first obtain the estimate of the expectation under the new probability measure
with an undecided parameter $A$ but without taking supremum.
Then, choosing appropriate $A$ gives the exact estimate under supremum norm of the solution component $Y$ in Step two, and the components of the solution $(Z,K)$ in the last
step.
\\

\noindent \textbf{Step one}. Estimates of $\EQ[ |\delta Y_t|^{2p}]$ and $p(2p-1)\EQ[\int^T_0 |\delta Y_t|^{2p-2}|\delta Z_t|^2dt]$.\\

Applying It\^{o}'s formula to $|\delta Y_t|^{2p}$ gives that
\begin{align}\label{deltaY2p}
\begin{split}
|\delta Y_t|^{2p}
=&|\delta Y_T|^{2p}
+2p\int^T_t (\delta Y_s)^{2p-1}\gamma_s\delta X_sds
+2p\int^T_t (\delta Y_s)^{2p-1}\beta_s\delta Y_sds
-2p\int^T_t (\delta Y_s)^{2p-1}\delta Z_sdW^\mathbb{Q}_s\\
&+2p\int^T_t (\delta Y_s)^{2p-1} d\delta K_s-p(2p-1)\int^T_t(\delta Y_s)^{2p-2}|\delta Z_s|^2ds.
\end{split}
\end{align}
Taking expectation under probability measure $\mathbb{Q}$ and recalling the boundedness of $\delta Y$, we have
\begin{align}\label{EdeltaY2p}
\begin{split}
&\EQ[|\delta Y_t|^{2p}]
+p(2p-1)\EQ\Big[\int^T_t |\delta Y_s|^{2p-2}|\delta Z_s|^2ds\Big]\\
\leqslant &\EQ[|\delta Y_T|^{2p}]
+2p\EQ\Big[\int^T_t |\delta Y_s|^{2p-1}|\gamma_s|
|\delta X_s|ds\Big]
+2p\EQ\Big[\int^T_t |\delta Y_s|^{2p}|\beta_s|ds\Big]\\
&+2p\EQ\Big[\int^T_t (\delta Y_s)^{2p-1} d\delta K_s\Big].
\end{split}
\end{align}

Recall the $L^{p^*}$-integrability of $\mathcal{E}(\mu *W)_T$. We have for any $\mathcal{F}_T$-measurable and non-negative
variable $\mathit{X}\in {L}^{q^*}$ that
\begin{equation}\label{q*}
\EQ[\mathit{X}]=\EE[\mathcal{E}(\mu *W)_T\mathit{X}]\leqslant (\EE[\mathcal{E}(\mu *W)^{p^*}_T])^{1/p^*}(\EE[\mathit{X}^{q^*}])^{1/q^*}
\leqslant C (\EE[\mathit{X}^{q^*}])^{1/q^*},
\end{equation}
where $q^*$ is the conjugate exponent of $p^*$.
Moreover, we know that both $X^1$ and $X^2$ are in the space $\mathbb{S}^p [0,T]$ under Assumption (HX) for any $p \geqslant 2$.
Now we are ready to deal with the inequality (\ref{EdeltaY2p}), where the first term can be estimated by the above inequality (\ref{q*}) and
the Lipschitz assumption of $g$ as
\begin{equation}\label{term1}
\EQ[|\delta Y_T|^{2p}]\leqslant L^{2p}\EQ[|\delta X_T|^{2p}]\leqslant C_p(\EE[|\delta X_T|^{2pq^*}])^{1/q^*}
\leqslant C_p
\Vert  \delta X \Vert^{2p}_{\mathbb{S}^{2pq^*}}.
\end{equation}
We also list here the following two estimates for later use. By (\ref{q*}), Cauchy-Schwarz inequality and the $L^p$-integrability of $K^1_T,
K^2_T$ and $\int^T_0|\gamma_t|^2dt$ with arbitrary $p$, we derive that, for any $p>1$
\begin{align}\label{deltaXgamma}
\begin{split}
&\EQ\Big[\sup_{t\in[0,T]}| \delta X_t|^{2p}\Big(\int^T_0  |\gamma_t|^2dt\Big)^p\Big]
\leqslant
C \Big(\EE\Big[\sup_{t\in[0,T]}| \delta X_t|^{2pq^*}\Big(\int^T_0 |\gamma_t|^{2}dt\Big)^{pq^*}\Big]\Big)^{1/q^*}\\
\leqslant &
C \Big(\EE\Big[\sup_{t\in[0,T]}| \delta X_t|^{4pq^*}\Big]\Big)^{1/2q^*}
\Big(\EE\Big[\Big(\int^T_0 |\gamma_t|^{2}dt\Big)^{2pq^*}\Big]\Big)^{1/2q^*}
\leqslant C_p
\Vert  \delta X \Vert^{2p}_{\mathbb{S}^{4pq^*}},
\end{split}
\end{align}
and
\begin{align}\label{deltaXK}
\begin{split}
&\EQ\Big[\sup_{t\in[0,T]}| \delta X_t|^{p}(K^1_T+K^2_T)^p\Big]
\leqslant
C \Big(\EE\Big[\sup_{t\in[0,T]}| \delta X_t|^{pq^*}(K^1_T+K^2_T)^{pq^*}\Big]\Big)^{1/q^*}\\
\leqslant &
C \Big(\EE\Big[\sup_{t\in[0,T]}| \delta X_t|^{2pq^*}\Big]\Big)^{1/2q^*}
\Big(\EE[(K^1_T+K^2_T)^{2pq^*}]\Big)^{1/2q^*}
\leqslant C_p
\Vert  \delta X \Vert^{p}_{\mathbb{S}^{2pq^*}}.
\end{split}
\end{align}

For the second term of (\ref{EdeltaY2p}), we can make use of H\"{o}lder's inequality, Young's inequality and (\ref{deltaXgamma}) to get
\begin{align}\label{term2}
\begin{split}
& 2p\EQ\Big[\int^T_t |\delta Y_s|^{2p-1}|\gamma_s| | \delta X_s|ds\Big]
\leqslant
\EQ\Big[\int^T_t |\delta Y_s|^{2p}ds\Big]
+p^2\EQ\Big[\int^T_t |\delta Y_s|^{2p-2}|\gamma_s|^2| \delta X_s|^2ds\Big]\\
\leqslant &
\EQ\Big[\int^T_t |\delta Y_s|^{2p}ds\Big]
+p^2\EQ\Big[\sup_{t\in[0,T]}|\delta Y_t|^{2p-2}\sup_{t\in[0,T]}| \delta X_t|^2\int^T_0  |\gamma_s|^2ds\Big]\\
\leqslant &
\EQ\Big[\int^T_t |\delta Y_s|^{2p}ds\Big]
+\frac{1}{qA}\EQ\Big[\sup_{t\in[0,T]}|\delta Y_t|^{(2p-2)q}\Big]
+p^{2p-1}A^{p-1}\EQ\Big[\sup_{t\in[0,T]}| \delta X_t|^{2p}\Big(\int^T_0  |\gamma_s|^2ds\Big)^p\Big]\\
\leqslant &
\EQ\Big[\int^T_t |\delta Y_s|^{2p}ds\Big]
+\frac{1}{qA}\EQ\Big[\sup_{t\in[0,T]}|\delta Y_t|^{2p}\Big]
+A^{p-1}C_p\Vert  \delta X \Vert^{2p}_{\mathbb{S}^{4pq^*}},
\end{split}
\end{align}
where $q$ is the conjugate exponent of $p$ and $A>1$ is a constant yet to be determined.

For the third term of (\ref{EdeltaY2p}), we have
\begin{equation}\label{add-Y}
2p\EQ\Big[\int^T_t |\delta Y_s|^{2p}|\beta_s|ds\Big]
\leqslant 2pL \EQ\Big[\int^T_t |\delta Y_s|^{2p}ds\Big].
\end{equation}

Regarding the last term of (\ref{EdeltaY2p}) with reflection, since $g(X^j_s)\leqslant Y^j_s$ for all $s\in[0,T]$ and $K^j$ only increases
when $Y^j=g(X^j)$, $j=1,2$, we can firstly derive that
\begin{equation*}
(Y^1_s-Y^2_s)dK^1_s=[Y^1_s-g(X^1_s)+g(X^1_s)-g(X^2_s)+g(X^2_s)-Y^2_s]dK^1_s\leqslant [g(X^1_s)-g(X^2_s)]dK^1_s,
\end{equation*}
and similarly,
\begin{equation*}
(Y^2_s-Y^1_s)dK^2_s\leqslant [g(X^2_s)-g(X^1_s)]dK^2_s,
\end{equation*}
which imply that
\begin{equation}\label{deltaYdeltaK}
\begin{split}
\delta Y_sd\delta K_s &=(Y^1_s-Y^2_s)dK^1_s+(Y^2_s-Y^1_s)dK^2_s\\
&\leqslant |g(X^1_s)-g(X^2_s)|d(K^1_s+K^2_s)\leqslant L|\delta X_s|d(K^1_s+K^2_s).
\end{split}
\end{equation}
Then, we can use the same arguments as above to obtain
\begin{align}\label{term3}
\begin{split}
& 2p\EQ\Big[\int^T_t (\delta Y_s)^{2p-1} d\delta K_s\Big]
\leqslant
2p L \EQ\Big[\int^T_t |\delta Y_s|^{2p-2}|\delta X_s| d (K^1_s+K^2_s)\Big]\\
\leqslant &
2p L \EQ\Big[\sup_{t\in[0,T]}|\delta Y_t|^{2p-2}
\sup_{t\in[0,T]}| \delta X_t|(K^1_T+K^2_T)\Big]\\
\leqslant &
\frac{1}{qA}\EQ\Big[\sup_{t\in[0,T]}|\delta Y_t|^{(2p-2)q}\Big]
+\frac{(2pL)^p}{p}A^{p-1}
 \EQ\Big[\sup_{t\in[0,T]}| \delta X_t|^p(K^1_T+K^2_T)^p\Big]\\
\leqslant &
\frac{1}{qA}\EQ\Big[\sup_{t\in[0,T]}|\delta Y_t|^{2p}\Big]
+A^{p-1}C_p
\Vert  \delta X \Vert^{p}_{\mathbb{S}^{2pq^*}}.
\end{split}
\end{align}

Plugging (\ref{term1}), (\ref{term2}), (\ref{add-Y}) and (\ref{term3}) back into (\ref{EdeltaY2p}) and by Cauthy-Schwarz inequality, we get
\begin{align*}
\begin{split}
&\EQ[|\delta Y_t|^{2p}]
+p(2p-1)\EQ\Big[\int^T_t |\delta Y_s|^{2p-2}|\delta Z_s|^2ds\Big]\\
\leqslant &
(2pL+1)\int^T_t \EQ|\delta Y_s|^{2p}ds
+\frac{2}{qA}\EQ\Big[\sup_{t\in[0,T]}|\delta Y_t|^{2p}\Big]
+A^{p-1}C_p
\Vert  \delta X \Vert^{p}_{\mathbb{S}^{4pq^*}}, \quad\forall t\in[0,T].
\end{split}
\end{align*}
Since $\delta Y$ is bounded and then $\EQ\Big[\sup_{t\in[0,T]}|\delta Y_t|^{2p}\Big]$ is finite, we obtain from Gronwall's inequality that
\begin{equation}\label{EdeltaY2p-2}
\EQ[|\delta Y_t|^{2p}]\leqslant e^{(2pL+1)T}\Big(\frac{2}{qA}\EQ\Big[\sup_{t\in[0,T]}|\delta Y_t|^{2p}\Big]
+A^{p-1}C_p\Vert  \delta X \Vert^{p}_{\mathbb{S}^{4pq^*}}\Big),\quad \forall t\in[0,T],
\end{equation}
and moreover,
\begin{align}
\begin{split}
&p(2p-1)\EQ\Big[\int^T_0 |\delta Y_s|^{2p-2}|\delta Z_s|^2ds\Big]\\
\leqslant &[(2pL+1)Te^{(2pL+1)T}+1]\Big(\frac{2}{qA}\EQ\Big[\sup_{t\in[0,T]}|\delta Y_t|^{2p}\Big]
+A^{p-1}C_p\Vert  \delta X \Vert^{p}_{\mathbb{S}^{4pq^*}}\Big)\\
\leqslant &
\frac{C_T}{qA}\EQ\Big[\sup_{t\in[0,T]}|\delta Y_t|^{2p}\Big]
+A^{p-1}C_p\Vert  \delta X \Vert^{p}_{\mathbb{S}^{4pq^*}}
\end{split}
\end{align}
with $C_T:= 2[(2pL+1)Te^{(2pL+1)T}+1]$.\\

\noindent \textbf{Step two}. Estimate of $\EQ[\sup_{t\in[0,T]}|\delta Y_t|^{2p}]$.\\

Next, we shall go back to (\ref{deltaY2p}) in the first step of the proof and follow similar procedure to get the estimate of
$\EQ[\sup_{t\in[0,T]}|\delta Y_t|^{2p}]$. To start with, we have
\begin{align}\label{EsupdeltaY2p}
\begin{split}
&\EQ[\sup_{t\in[0,T]}|\delta Y_t|^{2p}]
+p(2p-1)\EQ\Big[\int^T_0 |\delta Y_s|^{2p-2}|\delta Z_s|^2ds\Big]\\
\leqslant &
\EQ[|\delta Y_T|^{2p}]
+2p\EQ\Big[\int^T_0 |\delta Y_s|^{2p-1}|\gamma_s| | \delta X_s|ds\Big]
+2p\EQ\Big[\int^T_0 |\delta Y_s|^{2p}|\beta_s|ds\Big]\\
&+
2p\EQ\Big[\sup_{t\in[0,T]}\Big|\int^T_t (\delta Y_s)^{2p-1}\delta Z_sdW^\mathbb{Q}_s\Big|\Big]
+2p\EQ\Big[\sup_{t\in[0,T]}\int^T_t (\delta Y_s)^{2p-1} d\delta K_s\Big].\\
\end{split}
\end{align}
By (\ref{term2}), (\ref{add-Y}) and (\ref{EdeltaY2p-2}), we have
\begin{align}\label{step2term2}
\begin{split}
&2p\EQ\Big[\int^T_0 |\delta Y_s|^{2p-1}|\gamma_s| | \delta X_s|ds\Big]
+2p\EQ\Big[\int^T_0 |\delta Y_s|^{2p}|\beta_s|ds\Big]\\
\leqslant &
(2pL+1)\EQ\Big[\int^T_0 |\delta Y_s|^{2p}ds\Big]
+\frac{1}{qA}\EQ\Big[\sup_{t\in[0,T]}|\delta Y_t|^{2p}\Big]
+A^{p-1}C_p\Vert  \delta X \Vert^{2p}_{\mathbb{S}^{4pq^*}}\\
\leqslant &
\frac{C_T}{qA}\EQ\Big[\sup_{t\in[0,T]}|\delta Y_t|^{2p}\Big]
+A^{p-1}C_p\Vert  \delta X \Vert^{p}_{\mathbb{S}^{4pq^*}},
\end{split}
\end{align}
similarly by (\ref{deltaYdeltaK}) and (\ref{term3}),
\begin{align}\label{step2term4}
\begin{split}
&2p\EQ\Big[\sup_{t\in[0,T]}\int^T_t (\delta Y_s)^{2p-1} d\delta K_s\Big]
\leqslant
2p\EQ\Big[\sup_{t\in[0,T]}\int^T_t L |\delta Y_s|^{2p-2}|\delta X_s| d (K^1_s+K^2_s)\Big]\\
=&2p L \EQ\Big[\int^T_0 |\delta Y_s|^{2p-2}|\delta X_s| d (K^1_s+K^2_s)\Big]
\leqslant
\frac{1}{qA}\EQ\Big[\sup_{t\in[0,T]}|\delta Y_t|^{2p}\Big]
+A^{p-1}C_p\Vert  \delta X \Vert^{p}_{\mathbb{S}^{2pq^*}}.
\end{split}
\end{align}
As for the stochastic integral term in (\ref{EsupdeltaY2p}), we derive that
\begin{align}\label{step2term3}
\begin{split}
&2p\EQ\Big[\sup_{t\in[0,T]}\Big|\int^T_t (\delta Y_s)^{2p-1}\delta Z_s dW^\mathbb{Q}_s\Big|\Big]
\leqslant  2p \tilde{C}
\EQ\Big[\Big(\int^T_0|\delta Y_s|^{2(2p-1)}|\delta Z_s |^2ds\Big)^{1/2}\Big]\\
\leqslant &
 p \EQ\Big[2\Big(\sup_{t\in[0,T]}|\delta Y_t|^{2p}\int^T_0\tilde{C}^2|\delta Y_s|^{2p-2}|\delta Z_s |^2ds\Big)^{1/2}\Big]\\
\leqslant & p
\Big(\frac{1}{2p-1}
\EQ\Big[\sup_{t\in[0,T]}|\delta Y_t|^{2p}\Big]
+(2p-1)\tilde{C}^2\EQ\Big[\int^T_0|\delta Y_s|^{2p-2}|\delta Z_s |^2ds\Big]\Big),
\end{split}
\end{align}
where $\tilde{C}$ is the constant coming from the following B-D-G inequality
\begin{equation*}
\EQ\Big[\sup_{t\in[0,T]}\Big|\int^T_t \psi_s dW^\mathbb{Q}_s\Big|\Big]
\leqslant \tilde{C}
\EQ\Big[\Big(\int^T_0|\psi_s|^2ds\Big)^{1/2}\Big],
\end{equation*}
which holds for all the $\mathcal{F}_t$-adapted stochastic processes satisfying $\mathbb{Q}\big\{\int^T_0|\psi_s|^2ds<\infty\big\}=1$.
Combining (\ref{EsupdeltaY2p})-(\ref{step2term3}) and together with the results in the first step, we can finally get
\begin{align*}
\begin{split}
\frac{p-1}{2p-1}\EQ[\sup_{t\in[0,T]}|\delta Y_t|^{2p}]
\leqslant &
\frac{C_T +1}{qA}\EQ\Big[\sup_{t\in[0,T]}|\delta Y_t|^{2p}\Big]
+A^{p-1}C_p\Vert  \delta X \Vert^{p}_{\mathbb{S}^{4pq^*}}\\
&+p(2p-1)\tilde{C}^2\EQ\Big[\int^T_0|\delta Y_s|^{2p-2}|\delta Z_s |^2ds\Big]\Big)\\
\leqslant &
[C_T\tilde{C}^2+C_T+1]
\frac{1}{qA}\EQ\Big[\sup_{t\in[0,T]}|\delta Y_t|^{2p}\Big]
+A^{p-1}C_p\Vert  \delta X \Vert^{p}_{\mathbb{S}^{4pq^*}}.
\end{split}
\end{align*}

Choosing $A:=2[C_T\tilde{C}^2+C_T+1]$, we can achieve the desired result for $Y$ part.\\

\noindent \textbf{Step three}. Estimate of $\EQ[(\int^T_0 |\delta Z_s|^2ds)^p]$ and $\EQ[|\delta K_T|^{2p}]$.\\

Applying It\^{o}'s formula to $|\delta Y_t|^2$ gives that
\begin{align*}\label{deltaY^2}
|\delta Y_t|^2
=&|\delta Y_T|^2+\int^T_t 2\delta Y_s(\gamma_s \delta X_s+\beta_s \delta Y_s+\mu_s \delta Z_s)ds-\int^T_t 2\delta Y_s \delta Z_s dW_s\\
&+\int^T_t 2\delta Y_s d\delta K_s-\int^T_t |\delta Z_s|^2ds\\
=&|\delta Y_T|^2+\int^T_t 2\delta Y_s\gamma_s \delta X_sds
+\int^T_t 2|\delta Y_s|^2\beta_s ds
-\int^T_t 2\delta Y_s \delta Z_s dW^\mathbb{Q}_s\\
&+\int^T_t 2\delta Y_s d\delta K_s-\int^T_t |\delta Z_s|^2ds.
\end{align*}
Thus,
\begin{align*}
\begin{split}
\EQ\Big[\Big(\int^T_0 |\delta Z_s|^2ds\Big)^p\Big]
\leqslant
& C_p\Big\{\EQ|\delta Y_T|^{2p}+
\EQ\Big|\int^T_ 0 2\delta Y_s\gamma_s \delta X_sds\Big|^p
+\EQ\Big| \int^T_0 2|\delta Y_s|^2\beta_s ds \Big|^p
\\
&+\EQ\Big|\int^T_ 0 2\delta Y_s\delta Z_sdW^\mathbb{Q}_s\Big|^p
+\EQ\Big|\int^T_0 2\delta Y_s d\delta K_s\Big|^p\Big\}.
\end{split}
\end{align*}
Then, by (\ref{deltaXgamma}), (\ref{deltaXK}) and similar arguments as in the first step, we obtain that
\begin{align*}
\begin{split}
&\EQ\Big|\int^T_ 0 2\delta Y_s\gamma_s \delta X_sds\Big|^p
\leqslant
C_p \EQ\Big[\sup_{t\in[0,T]}| \delta Y_t|^{p}\Big(\int^T_0  |\gamma_s| |\delta X_s|ds\Big)^p\Big]\\
\leqslant & C_p \EQ\Big[\sup_{t\in[0,T]}| \delta Y_t|^{2p}]
+C_p
\EQ\Big[\sup_{t\in[0,T]}| \delta X_t|^{2p}\Big(\int^T_0  |\gamma_s|^2ds\Big)^p\Big]
\leqslant C_p\Vert  \delta X \Vert^{p}_{\mathbb{S}^{4pq^*}},
\end{split}
\end{align*}

\begin{align*}
\begin{split}
\EQ\Big| \int^T_0 2|\delta Y_s|^2\beta_s ds \Big|^p
\leqslant
C_p \EQ\Big[\sup_{t\in[0,T]}| \delta Y_t|^{2p}]
\leqslant  C_p\Vert  \delta X \Vert^{p}_{\mathbb{S}^{4pq^*}},
\end{split}
\end{align*}
and
\begin{align*}
\begin{split}
&\EQ\Big|\int^T_0 2\delta Y_s d\delta K_s\Big|^p
\leqslant
\EQ\Big[\Big( \int^T_0 2L|\delta X_s|d(K^1_s+K^2_s)\Big)^p\Big]
\\
\leqslant
& C_p \EQ\Big[\sup_{t\in[0,T]}| \delta X_t|^{p}(K^1_T+K^2_T)^p\Big]
\leqslant C_p\Vert  \delta X \Vert^{p}_{\mathbb{S}^{2pq^*}}.
\end{split}
\end{align*}
As for the martingale term, by B-D-G and Young's inequality, we  derive that
\begin{align*}
\begin{split}
C_p\EQ\Big|\int^T_ 0 2\delta Y_s\delta Z_sdW^\mathbb{Q}_s\Big|^p
&\leqslant
C_p \EQ\Big[\Big(\int^T_ 0 |\delta Y_s|^2|\delta Z_s|^2ds\Big)^{\frac{p}{2}}\Big]
\leqslant
C_p \EQ\Big[ \sup_{t\in[0,T]}  |\delta Y_t|^p\Big(\int^T_ 0 |\delta Z_s|^2ds\Big)^{\frac{p}{2}}\Big]\\
&\leqslant
C_p \EQ\Big[ \sup_{t\in[0,T]}  |\delta Y_t|^{2p}\Big]
+\frac{1}{2}\EQ\Big[\Big(\int^T_0 |\delta Z_s|^2ds\Big)^p\Big],
\end{split}
\end{align*}
and then together with the result for $Y$ part, we get the conclusion for $Z$.

Regarding the increasing process $K$, notice that we have the expression
\begin{align*}
\begin{split}
\delta K_T
&=
\delta Y_0-[g(X^1_T)-g(X^2_T)]
-\int^T_0[ f(s,X^1_s,Y^1_s,Z^1_s)- f(s,X^2_s,Y^2_s,Z^2_s)]ds
+\int^T_0 \delta Z_s dW_s\\
&=\delta Y_0-[g(X^1_T)-g(X^2_T)]
-\int^T_0 \gamma_s \delta X_sds
-\int^T_0 \beta_s \delta Y_sds
+\int^T_0 \delta Z_s dW^{\mathbb{Q}}_s.
\end{split}
\end{align*}
Thus, by (\ref{term1}), (\ref{deltaXgamma}) and the conclusion for both $Y$ and $Z$ parts, we further obtain
\begin{align*}
\begin{split}
\EQ|\delta K_T|^{2p}
\leqslant & C_p\Big[
\EQ|\delta Y_0|^{2p}
+\EQ|g(X^1_T)-g(X^2_T)|^{2p}
+\EQ\big|\int^T_0 \gamma_s \delta X_sds\big|^{2p}\\
&+\EQ\big|\int^T_0 \beta_s \delta Y_sds\big|^{2p}
+\EQ \big|\int^T_0 \delta Z_s dW^{\mathbb{Q}}_s\big|^{2p}
\Big]\\
\leqslant &
C_p\Big[
\Vert  \delta X \Vert^{p}_{\mathbb{S}^{4pq^*}}
+ \Vert  \delta X \Vert^{2p}_{\mathbb{S}^{2pq^*}}
+\Vert  \delta X \Vert^{2p}_{\mathbb{S}^{4pq^*}}
+\EQ\big[\big(\int^T_0 |\delta Z_s|^2ds\big)^p\big]\Big]
\leqslant C_p
\Vert  \delta X \Vert^{p}_{\mathbb{S}^{4pq^*}},
\end{split}
\end{align*}
which concludes the proof.
\end{proof}\\

Finally, we estimate the variation of the two solutions to quadratic reflected BSDEs constructed with different forward processes, $X^1$ and
$X^2$, under the original probability measure.

\begin{theorem}\label{YandYeZandZe}
Suppose Assumptions $\mathrm{(HX)}$ and $\mathrm{(HF)}$ hold. Then,
for $\Vert  \delta X \Vert_{\mathbb{S}^{4\bar{p}q^*}}\leq 1$, we
have the following conclusion:
\begin{equation*}
\EE\Big[\sup_{t\in[0,T]}|\delta Y_t|^{2}+\int^T_0 |\delta Z_t|^2 dt+|\delta K_T|^{2}\Big]
\leqslant C  \Vert \delta X \Vert_{\mathbb{S}^{4\bar{p}q^*}},
\end{equation*}
where $q^*$ is given in Proposition \ref{EQdeltaY} and $\bar{p}$ is the minimum parameter corresponding to the BMO martingale
$\mu*W$ satisfying assertion $\mathrm{4)}$ of Lemma \ref{BMO}.
\end{theorem}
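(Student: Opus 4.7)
The goal is to transfer the $L^p(\mathbb{Q})$ estimates of Proposition \ref{EQdeltaY} back to $L^2(\mathbb{P})$ through the Radon--Nikodym derivative $d\mathbb{P}/d\mathbb{Q} = \mathcal{E}(\mu*W)_T^{-1}$. For any nonnegative $\mathcal{F}_T$-measurable $\xi$ and any conjugate pair $(r,r')$ with $r,r'>1$, I would write
\[
\EE[\xi] = \EQ\bigl[\xi\cdot\mathcal{E}(\mu*W)_T^{-1}\bigr] \leqslant \bigl(\EQ[\xi^r]\bigr)^{1/r}\bigl(\EQ[\mathcal{E}(\mu*W)_T^{-r'}]\bigr)^{1/r'},
\]
and then observe that $\EQ[\mathcal{E}(\mu*W)_T^{-r'}] = \EE[\mathcal{E}(\mu*W)_T^{-(r'-1)}]$ is a \emph{negative} moment of the stochastic exponential under $\mathbb{P}$, which is exactly what John--Nirenberg (assertion 4 of Lemma \ref{BMO}) controls.

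By the definition of $\bar{p}$ as the minimum parameter for which John--Nirenberg applies to $\mu*W$, one has $\EE[\mathcal{E}(\mu*W)_T^{-1/(\bar{p}-1)}]\leqslant C$. Choosing $r'-1=1/(\bar{p}-1)$, so that $r'=\bar{p}/(\bar{p}-1)$ and its conjugate is $r=\bar{p}$, the inequality above collapses to
\[
\EE[\xi] \leqslant C\,(\EQ[\xi^{\bar{p}}])^{1/\bar{p}}.
\]
I would then apply this in turn with $\xi = \sup_{t\in[0,T]}|\delta Y_t|^2$, $\xi = \int_0^T|\delta Z_t|^2 dt$, and $\xi = |\delta K_T|^2$, so that raising to the $\bar{p}$-th power matches exactly the left-hand side of \eqref{EQdeltaYZK} with parameter $p=\bar{p}$. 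Proposition \ref{EQdeltaY} then gives $(\EQ[\xi^{\bar{p}}])^{1/\bar{p}} \leqslant C\,\Vert\delta X\Vert_{\mathbb{S}^{4\bar{p}q^*}}$ for each of the three choices, and summing yields the desired bound.

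The main subtlety is really bookkeeping on the exponent: the $L^q$-norm of $\delta X$ on the right-hand side is increasing in $q$, so one wants $\bar{p}$ to be the \emph{smallest} admissible value, and one must verify that the resulting constant $C$ (and $\bar{p}$, $q^*$) depends only on the structural constants $L, T, M_g, M_f, \alpha$. This is guaranteed because the BMO norm of $\mu*W$, via $|\mu_s|\leqslant L(1+|Z^1_s|+|Z^2_s|)$ together with Proposition \ref{Z-BMO}, is bounded by such constants uniformly in the choice of driving processes $X^1, X^2$. Once this uniformity is noted, the rest of the argument is the two-line Hölder estimate above.
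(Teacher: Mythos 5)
Your proposal is correct and follows essentially the same route as the paper: the paper performs the identical Hölder split (there under $\mathbb{P}$, writing $\EE[X]=\EE[\mathcal{E}(\mu*W)_T^{1/\bar p}X\cdot\mathcal{E}(\mu*W)_T^{-1/\bar p}]$, which produces exactly your factors $(\EQ[X^{\bar p}])^{1/\bar p}$ and $\EE[\mathcal{E}(\mu*W)_T^{-1/(\bar p-1)}]$), controls the negative moment by John--Nirenberg with the minimal $\bar p$ satisfying $\Vert\mu*W\Vert_{BMO}<\sqrt 2(\sqrt{\bar p}-1)$, and then invokes Proposition \ref{EQdeltaY} with $p=\bar p$, with the same uniformity remark on the constants via the BMO bound on $\mu*W$.
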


\begin{proof}
Firstly, let $\bar{p}>1$ be the minimum parameter such that $\Vert\mu * W\Vert_{BMO}<\sqrt{2}(\sqrt{\bar{p}}-1)$ and $\bar{q}$ be its
conjugate exponent.
Note that the constant $C_{\bar{p}}$ appearing in the estimate of the assertion 4) of Lemma \ref{BMO} can be
substituted by a universal constant $C$, as ${\bar{p}}$ can be fully determined by $\Vert\mu * W\Vert_{BMO}$. Then, for any $\mathcal{F}_T$-measurable and non-negative random variable $X\in L^{\bar{p}}$, we obtain that
\begin{align}
\begin{split}
\EE[X]= &\EE[\mathcal{E}(\mu * W)^{1/\bar{p}}_T  X \cdot \mathcal{E}(\mu * W)^{-1/\bar{p}}_T]
\leqslant \Big( \EE\big[\mathcal{E}(\mu * W)_T  X^{\bar{p}}\big] \Big)^{1/\bar{p}}
\Big( \EE\big[\mathcal{E}(\mu * W)^{-\bar{q}/\bar{p}}_T\big] \Big)^{1/\bar{q}}\\
= & \Big( \EQ[X^{\bar{p}}] \Big)^{1/\bar{p}}
\Big( \EE\big[\mathcal{E}(\mu * W)^{-\frac{1}{\bar{p}-1}}_T\big] \Big)^{1/\bar{q}}
\leqslant C \Big( \EQ[X^{\bar{p}}] \Big)^{1/\bar{p}}.
\end{split}
\end{align}
Thus, we can conclude the proof by applying Proposition \ref{EQdeltaY} directly to derive
\begin{align}
\begin{split}
&\EE\Big[\sup_{t\in[0,T]}|\delta  Y_t|^{2}+\int^T_0 |\delta Z_t|^2 dt+|\delta K_T|^{2}\Big]\\
\leqslant  & C
\Big( \EQ\Big[\sup_{t\in[0,T]}|\delta  Y_t|^{2\bar{p}}
+\Big(\int^T_0 |\delta Z_t|^2 dt\Big)^{\bar{p}}
+|\delta K_T|^{2\bar{p}}\Big] \Big)^{1/\bar{p}}
\leqslant C \Vert  \delta X \Vert_{\mathbb{S}^{4\bar{p}q^*}}.
\end{split}
\end{align}

\end{proof}

\section{Application to numerical scheme for quadratic reflected BSDEs}

In this section, we apply the quantitative stability result to the convergence analysis for a discrete-time numerical
scheme for the quadratic reflected BSDE (\ref{Y})-(\ref{X}) under Markovian framework and Assumptions (HX) and (HF). For the sake of further time
discretization, we need to assume in this section that $b, \sigma$ and $f$ satisfy H\"{o}lder's continuity with respect to the time variable. That
is, for any $0\leqslant s \leqslant t\leqslant T$ and any $(x,y,z) \in \mathbb{R}\times \mathbb{R}\times \rm$,
\begin{equation}\tag{HT}
|b(t,x)-b(s,x)|+|\sigma(t)-\sigma(s)|+|f(t,x,y,z)-f(s,x,y,z)|
\leqslant L(t-s)^{\frac{1}{2}}.
\end{equation}

Different from quadratic BSDEs without reflection and Lipschitz BSDEs with reflection, where the solution component $Z$ is typically bounded in Markovian setup, the solution component $Z$ for quadratic reflected BSDE (\ref{Y})-(\ref{X}) is not necessarily bounded. This is the major difficulty to propose a numerical scheme and study its convergence. To overcome this difficulty, we resort to the discretely reflected version of BSDE introduced in (\ref{Y^R}), where the reflection is only permitted to operate at specific discrete time points.
In \cite{DQ}, we have proved that the corresponding solution $(Y^\mathcal{R},Z^\mathcal{R})$ is actually a good approximation of its
continuous counterpart $(Y,Z)$ in (\ref{Y}) (Note that the generator $f$ does not involve $y$ in \cite{DQ}, but one can easily extend the
result therein to include $y$ in the generator). Moreover, since the solution component $Z^\mathcal{R}$ is uniformly bounded, we can truncate the corresponding generator via the bound of $Z^{\mathcal{R}}$ and obtain a truncated discrete-time numerical scheme on each reflected interval. The quantitative stability result will play a pivotal role for the convergence analysis of this numerical scheme.
Firstly, let us give some basic definitions which will be used later.

\subsection{Definition and notations}
Given a partition $\pi:=\{0=t_0<t_1<\cdots<t_N=T\}$ of $[0, T]$, we shall first introduce the standard Euler scheme $X^\pi$ for $X$, which has
been widely studied in the literature and has the form
\begin{equation*}
\left\{
\begin{array}{lr}
X^\pi_0=x, & \\
X^\pi_{t_{i+1}}=X^\pi_{t_{i}}+b(t_i,X^\pi_{t_{i}})(t_{i+1}-t_{i})+\sigma(t_i)(W_{t_{i+1}}-W_{t_{i}}),\quad i\leqslant N-1,&
\end{array}
\right.
\end{equation*}
whose continuous-time version is defined correspondingly as
\begin{equation*}\label{X^pi}
X^\pi_t=X^\pi_{t_{i}}+b(t_i,X^\pi_{t_{i}})(t-t_{i})+\sigma(t_i)(W_t-W_{t_{i}}),\quad t\in[t_i, t_{i+1}),\ i\leqslant N-1.
\end{equation*}
Denote $|\pi|:=\max_{i\leqslant N-1} (t_{i+1}-t_{i})$ and without loss of generality, assume that $N|\pi|\leqslant L$.  Then under Assumption
(HX), we know that $X^\pi\in \mathbb{S}^{2p} [0,T]$ and
\begin{equation}\label{Eulerest}
\EE\Big[\sup_{0\leqslant t\leqslant T} |X_t-X^\pi_t|^{2p}\Big]+\max_{0\leqslant i\leqslant N-1}\EE\Big[\sup_{t\in[t_i,t_{i+1}]}
|X_t-X^\pi_{t_i}|^{2p}\Big]\leqslant C|\pi|^p,\quad p\geqslant 1,
\end{equation}
see e.g., \cite{KP}. Furthermore,  with piecewisely constant coefficients, we may regard (\ref{X^pi}) as a special case of (\ref{X}) with
coefficients satisfying (HX).\\

Next, we define $(Y^e,Z^e,K^e)$ as the solution to the following continuously reflected BSDE driven by $X^\pi$, instead of $X$ in (\ref{Y}),
\begin{align}\label{Y^e}
\begin{split}
&Y^e_t=g(X^\pi_T)+\int^T_t f(s,X^\pi_s,Y^e_s,Z^e_s)ds-\int^T_t Z^e_s dW_s +K^e_T-K^e_t, \\
&Y^e_t\geqslant g(X^\pi_t),\quad \int^T_0 (Y^e_t-g(X^\pi_t))dK^e_t=0.
\end{split}
\end{align}
Since $X^\pi \in \mathbb{S}^2[0,T]$ and the system (\ref{Y})-(\ref{X}) is decoupled, we know that $(Y^e,Z^e,K^e)\in
\mathbb{S}^\infty[0,T]\times\mathbb{H}^2([0,T];\rm)\times\mathbb{K}^2[0,T]$ and can further obtain \emph{a priori} estimates from Propositions
\ref{Z-BMO} and \ref{L^p}, i.e.,
$$\Vert Y^e\Vert_\infty\leqslant M, \quad\Vert Z^e*W\Vert^2_{BMO}\leqslant \frac{\exp(4\alpha M)}{\alpha^2}   [1+2\alpha M_f (1+M)T]$$
and the $L^p$-integrability of $K^e_T$ and $\int^T_0 |Z^e_t|^2 dt$ for any $p\geqslant 1$. \\

Now, we are in a position to introduce the aforementioned discretely reflected BSDE, which is defined recursively and only operates at
specific times $\mathcal{R}=\{r_j,0\leqslant j\leqslant \kappa\ \vert\ 0=r_0<r_1\cdots<r_{\kappa-1}<r_\kappa=T\}$. Let
$|\mathcal{R}|:=\max_{j\leqslant \kappa -1}(r_{j+1}-r_j)$ and for the sake of further discussion, suppose that $\mathcal{R} \subset \pi$,
which means the discrete reflection times are all included in the partition time points. The solution $(Y^{\mathcal{R}},Z^{\mathcal{R}})$
satisfies
$$Y^{\mathcal{R}}_T=\tilde{Y}^{\mathcal{R}}_T=g(X_T),$$
and for $j\leqslant \kappa-1$, $t\in[r_j,r_{j+1})$,
\begin{equation}\label{Y^R}
\left\{
\begin{array}{lr}
\tilde{Y}^{\mathcal{R}}_t=Y^{\mathcal{R}}_{r_{j+1}}
+\int^{r_{j+1}}_t f(s,X_s,\tilde{Y}^{\mathcal{R}}_s,Z^{\mathcal{R}}_s)ds
-\int^{r_{j+1}}_t Z^{\mathcal{R}}_s dW_s,
&  \\
Y^{\mathcal{R}}_t=\tilde{Y}^{\mathcal{R}}_t
+[g(X_t)-\tilde{Y}^{\mathcal{R}}_t]^ + \mathbbm{1}_{\{t\in{\mathcal{R}}\}}. &
\end{array}
\right.
\end{equation}

For later use, we also define the
solution $(Y^{\mathcal{R},e},Z^{\mathcal{R},e})$ to discretely reflected BSDE, which is the same as defined in (\ref{Y^R}) but with $X$
substituted by $X^\pi$, i.e.,
$$Y^{\mathcal{R},e}_T=\tilde{Y}^{\mathcal{R},e}_T=g(X^{\pi}_T),$$
and for $j\leqslant \kappa-1$, $t\in[r_j,r_{j+1})$,
\begin{equation}\label{Y^Re}
\left\{
\begin{array}{lr}
\tilde{Y}^{\mathcal{R},e}_t=Y^{\mathcal{R},e}_{r_{j+1}}
+\int^{r_{j+1}}_t f(s,X^{\pi}_s,\tilde{Y}^{\mathcal{R},e}_s,Z^{\mathcal{R},e}_s)ds
-\int^{r_{j+1}}_t Z^{\mathcal{R},e}_s dW_s,
&  \\
Y^{\mathcal{R},e}_t=\tilde{Y}^{\mathcal{R},e}_t
+[g(X^{\pi}_t)-\tilde{Y}^{\mathcal{R},e}_t]^+ \mathbbm{1}_{\{t\in{\mathcal{R}}\}}.&
\end{array}
\right.
\end{equation}
To simplify the expression, let us denote the discretely reflected BSDE systems (\ref{Y^R}) and (\ref{Y^Re}) as $DR(f, g, X)$ and $DR(f, g,
X^\pi)$, respectively. \\

Next, we apply truncation technique to handle the locally Lipschitz and quadratic growth condition. Define $f_n(t,x,y,z):=f(t,x,y,h_n(z))$ for
all $(t,x,y,z)\in[0,T]\times \mathbb{R}\times\mathbb{R}\times\rm$, where $h_n$ is a smooth modification of the projection on the centered ball
of radius $n$ such that $\vert h_n(z)\vert\leqslant n+1$, $\vert\nabla h_n\vert\leqslant 1$ and satisfying that $h_n(z)=z $ when $|z|\leqslant
n$, for all $n\in \mathbb{R}^+$. Thus, we can define analogously the truncated discretely reflected BSDEs $DR(f_n, g, X)$ and $DR(f_n,g,
X^\pi)$ in order to meet the Lipschitz condition, and denote their solutions by
$(Y^{\mathcal{R},n},Z^{\mathcal{R},n})$ and $(Y^{\mathcal{R},e,n},Z^{\mathcal{R},e,n})$, respectively.\\

Furthermore, thanks to \cite[Lemma 4.5]{DQ}, we know that the second component $Z^\mathcal{R}$
of the solution to discretely reflected BSDE is uniformly bounded with regard to the discrete reflection $\mathcal{R}$ and
the bound $M_z$ only depends on the given coefficients in our Assumptions (HF) and (HX). One can easily check that this result also holds true
for $Z^{\mathcal{R},e}$. Thus, taking $n=M_z$, we know immediately that $(Y^{\mathcal{R},M_z},Z^{\mathcal{R},M_z})$ (resp.
$(Y^{\mathcal{R},e,M_z},Z^{\mathcal{R},e,M_z})$) coincides with $(Y^{\mathcal{R}},Z^{\mathcal{R}})$ (resp.
$(Y^{\mathcal{R},e},Z^{\mathcal{R},e})$) and therefore we only need to focus on the discrete-time scheme for such truncated discretely
reflected BSDE with parameter $M_z$ and generator $f_{M_z}$, which satisfies, for all $(x,y,z),\
(x^\prime,y^\prime,z^\prime)\in\mathbb{R}\times\mathbb{R}\times\rm$, that
\begin{equation*}
|f_{M_z}(t,x,y,z)-f_{M_z}(t,x^\prime,y^\prime,z^\prime)|\leqslant L(M_z+2) |x-x^\prime|+L |y-y^\prime|+L(2M_z+3) |z-z^\prime|.
\end{equation*}

\subsection{Truncated discrete-time numerical scheme}
Inspired by classical numerical schemes under Lipschitz condition (See \cite{BT} \cite{Gobet} and \cite{Zhang} for BSDEs and \cite{BBJC}
\cite{Ma} for reflected BSDEs) and the truncated discretely reflected BSDE in the above subsection, we now introduce the following truly
discretized scheme with the help of truncation function $h_{M_z}$. We define a pair of piecewise constant process $(\bar{Y}^\pi, \bar{Z}^\pi)$ recursively via
$$\bar{Y}^\pi_{t_{N}}=\tilde{Y}^\pi_{t_{N}}=g(X^\pi_T)$$
and
\begin{equation}\label{discheme}
\left\{
\begin{array}{lr}
\bar{Z}^\pi_{t_{i}}=(t_{i+1}-t_{i})^{-1}\EE_{t_{i}}\big[\bar{Y}^\pi_{t_{i+1}}(W_{t_{i+1}}-W_{t_{i}})\big], &\\
\tilde{Y}^\pi_{t_{i}}=\EE_{t_{i}}\big[\bar{Y}^\pi_{t_{i+1}}\big]+(t_{i+1}-t_{i})f\big(t_i,X^\pi_{t_i},\tilde{Y}^\pi_{t_{i}},h_{M_z}(\bar{Z}^\pi_{t_{i}})\big),
&\quad i \leqslant N-1,\\
\bar{Y}^\pi_{t_{i}}=\tilde{Y}^\pi_{t_{i}}+\big[g(X^\pi_{t_i})-\tilde{Y}^\pi_{t_{i}}\big]^+ \mathbbm{1}_{\{t_i \in{\mathcal{R}}\}}, &
\end{array}
\right.
\end{equation}
and setting
$$(\bar{Y}^\pi_t, \bar{Z}^\pi_t)=(\bar{Y}^\pi_{t_i}, \bar{Z}^\pi_{t_i})\quad  \text{for} \ t\in[t_i,t_{i+1}),\ i\leqslant N-1.$$

For later use, we shall introduce the continuous-time scheme associated with the square integrable processes $(\bar{Y}^\pi, \bar{Z}^\pi)$. By
the martingale representation theorem, we know that there exists $Z^\pi\in\mathbb{H}^2([t_i,t_{i+1});\rm)$ such that
$$\bar{Y}^\pi_{t_{i+1}}=\EE_{t_{i}}\big[\bar{Y}^\pi_{t_{i+1}}\big]+\int^{t_{i+1}}_{t_i} Z^\pi_u dW_u, \quad i\leqslant N-1.$$
We can then define $\tilde{Y}^\pi$ and $Y^\pi$ for $[t_i,t_{i+1}),\ i\leqslant N-1$ by

\begin{equation}\label{ctscheme}
\left\{
\begin{array}{lr}
\tilde{Y}^\pi_{t}=\bar{Y}^\pi_{t_{i+1}}+(t_{i+1}-t)f_{M_z}\big(t_i,X^\pi_{t_i},\tilde{Y}^\pi_{t_{i}},\bar{Z}^\pi_{t_{i}}\big)-\int^{t_{i+1}}_{t}
Z^\pi_u dW_u,
&\\
Y^\pi_{t}=\tilde{Y}^\pi_{t}+\big[g(X^\pi_{t})-\tilde{Y}^\pi_{t}\big]^+ \mathbbm{1}_{\{t\in{\mathcal{R}}\}}. &
\end{array}
\right.
\end{equation}
One can check the connection between (\ref{discheme}) and (\ref{ctscheme}): $Y^\pi=\bar{Y}^\pi$ on $\pi$ and $Y^\pi=\tilde{Y}^\pi$ on
$[0,T]\setminus \mathcal{R}$, and by It\^o's isometry,
\begin{equation*}\label{barZpi}
\bar{Z}^\pi_t=\bar{Z}^\pi_{t_i}=(t_{i+1}-t_{i})^{-1} \EE_{t_i}\Big[\int^{t_{i+1}}_{t_i} Z^\pi_u du\Big],\quad t\in[t_i,t_{i+1}),\ i\leqslant
N-1.
\end{equation*}
Moreover, we define the piecewise constant process for $Z^\mathcal{R}$ likewise by
\begin{equation*}\label{barZR}
\bar{Z}^\mathcal{R}_t:=(t_{i+1}-t_{i})^{-1} \EE_{t_i}\Big[\int^{t_{i+1}}_{t_i} Z^\mathcal{R}_u du\Big],\quad  t\in[t_i,t_{i+1}),\ i\leqslant
N-1.
\end{equation*}
which is known as the best $\mathbb{H}^2$-approximation of $Z^\mathcal{R}$.

\subsection{Approximation results for discretely reflected BSDEs}
It has been shown in \cite{DQ} that discretely reflected BSDE is actually a good approximation of continuously reflected
BSDE. Thus, we shall first consider the convergence from the numerical scheme (\ref{discheme}) to the discretely reflected BSDE in this
subsection. With the aid of the boundedness of $Z^\mathcal{R}$ and its truncation, we can indeed proceed under Lipschitz condition now.

There are already results about the convergence for discretely reflected BSDEs driven by $X$ and $X^\pi$ under Lipschitz condition, see
\cite[Theorem 3.1 and Corollary 3.1]{BBJC}, where the authors first show that the approximation error for the discretely reflected BSDE constructed with
$X$ (resp. $X^\pi$) is ultimately controlled by $\Vert Z^\mathcal{R}-\bar{Z}^\mathcal{R}\Vert_{\mathbb{H}^2}$ (resp. $\Vert
Z^{\mathcal{R},e}-\bar{Z}^{\mathcal{R},e}\Vert_{\mathbb{H}^2}$), and then by means of the representation for $Z^\mathcal{R}$(resp.
$Z^{\mathcal{R},e}$) in terms of the next reflection time to obtain the regularity result. We may now directly apply the result to our
truncated dicrete-time scheme under the Assumptions (HX), (HF), (HT) and the following additional assumptions.

\begin{assumption}
$g$ and $\sigma$ further satisfy: \\
\noindent$\mathrm{(H1)}$ $g\in C^1_b$ with $L$-Lipschitz derivative.\\
\noindent$\mathrm{(H2)}$ $g\in C^2_b$ with $L$-Lipschitz first and second derivatives, $\sigma$ satisfies $L$-Lipschitz condition with respect
to time variable.\\
\end{assumption}

\begin{lemma}\label{Quaresult}
Suppose $\mathrm{(HX)}$, $\mathrm{(HF)}$ and $\mathrm{(HT)}$ hold. Then,
\begin{align*}
\begin{split}
&\max_{ j \leqslant \kappa-1}\Vert\sup_{t\in[r_j,r_{j+1}]} |Y^\mathcal{R}_t-Y^{\pi}_t|\Vert_{\mathbb{L}^2}
+\max_{ i \leqslant N-1}\Vert\sup_{t\in(t_i,t_{i+1}]} |Y^\mathcal{R}_t-\bar{Y}^{\pi}_{t_{i+1}}|\Vert_{\mathbb{L}^2}
\leqslant C\big(\alpha_1(\kappa)\vert \pi\vert^{\frac{1}{2}}+\epsilon_1(\pi)\big),\\
&\Vert Z^\mathcal{R}-Z^{\pi}\Vert_{\mathbb{H}^2}+\Vert Z^\mathcal{R}-\bar{Z}^{\pi}\Vert_{\mathbb{H}^2}
\leqslant C\big(\alpha_2(\kappa)\vert \pi\vert^{\frac{1}{2}}
+\epsilon_1(\pi)\big),\\
&\Vert Z^{\mathcal{R},e}-Z^{\pi}\Vert_{\mathbb{H}^2}+\Vert Z^{\mathcal{R},e}-\bar{Z}^{\pi}\Vert_{\mathbb{H}^2}
\leqslant C \big(\alpha_1(\kappa)\vert \pi\vert^{\frac{1}{2}}+\epsilon_2(\pi)\big),
\end{split}
\end{align*}
with $(\alpha_1(\kappa),\alpha_2(\kappa), \epsilon_1(\pi),\epsilon_2(\pi))=(\kappa^{\frac{1}{4}},\kappa^{\frac{1}{2}},\vert
\pi\vert^{\frac{1}{4}},\vert \pi\vert^{\frac{1}{4}})$
under $\mathrm{(H1)}$, and $(\alpha_1(\kappa),\alpha_2(\kappa),
\epsilon_1(\pi),\epsilon_2(\pi))=(1,\kappa^{\frac{1}{2}},\vert\pi\vert^{\frac{1}{2}},\vert \pi\vert^{\frac{1}{4}})$
under $\mathrm{(H2)}$.
\end{lemma}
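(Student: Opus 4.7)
The plan is to reduce the statement to the existing Lipschitz numerical results of \cite{BBJC} by exploiting the truncation device introduced above. Fixing the truncation level at $n=M_z$ makes the generator $f_{M_z}$ globally Lipschitz in $(x,y,z)$ with constants depending only on the given data, so that $DR(f_{M_z},g,X)$, $DR(f_{M_z},g,X^\pi)$ and the discrete scheme \eqref{discheme} all fall within the classical Lipschitz framework. I split the target error via the triangle inequality into (a) the purely numerical discretisation error between $DR(f_{M_z},g,X^\pi)$ and the scheme, and (b) the forward-process discrepancy between $DR(f_{M_z},g,X)$ and $DR(f_{M_z},g,X^\pi)$.

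\noindent\textbf{Part (a).} Since $(Y^{\mathcal{R},e},Z^{\mathcal{R},e})$ solves $DR(f_{M_z},g,X^\pi)$ and \eqref{discheme} is its Euler-type discrete scheme on the very same driver $X^\pi$, \cite[Theorem~3.1 and Corollary~3.1]{BBJC} apply directly: they first bound the numerical error by $\|Z^{\mathcal{R},e}-\bar{Z}^{\mathcal{R},e}\|_{\mathbb{H}^2}$ plus an $O(|\pi|^{1/2})$ term, and then control the former through the representation of $Z^{\mathcal{R},e}$ in terms of the next reflection time, whose path regularity is inherited from the $C^1_b$ (resp.\ $C^2_b$) assumption on $g$. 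This immediately yields the third inequality of the lemma together with the companion $Y$-estimate for $(Y^{\mathcal{R},e},\bar{Y}^\pi)$, with the rates $\alpha_1(\kappa)|\pi|^{1/2}+\epsilon_i(\pi)$ as prescribed by \cite{BBJC} under (H1) or (H2).

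\noindent\textbf{Part (b) and conclusion.} I invoke classical Lipschitz stability for reflected BSDEs (\cite[Proposition~3.6]{El}) iteratively on each interval $[r_j,r_{j+1}]$, propagating backwards from $r_\kappa=T$. Inside each interval the equation is non-reflected with Lipschitz driver $f_{M_z}$, so standard $L^2$-stability there is controlled by the terminal data at $r_{j+1}$ and by $\EE\int_{r_j}^{r_{j+1}}|f_{M_z}(s,X_s,\cdot,\cdot)-f_{M_z}(s,X^\pi_s,\cdot,\cdot)|^2\,ds\le C\,\EE[\sup_t|X_t-X^\pi_t|^2]$, with $Y$- and $Z$-differences absorbed via Gronwall. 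At each reflection time $r_j$ the $1$-Lipschitz property of $\max(\cdot,\cdot)$ gives $|Y^{\mathcal{R}}_{r_j}-Y^{\mathcal{R},e}_{r_j}|\le|\tilde{Y}^{\mathcal{R}}_{r_j}-\tilde{Y}^{\mathcal{R},e}_{r_j}|+L|X_{r_j}-X^\pi_{r_j}|$, so no geometric blow-up in $\kappa$ occurs. Combined with the Euler bound \eqref{Eulerest}, this yields $\|Y^{\mathcal{R}}-Y^{\mathcal{R},e}\|_{\mathbb{S}^2}^2+\|Z^{\mathcal{R}}-Z^{\mathcal{R},e}\|_{\mathbb{H}^2}^2\le C|\pi|$, an $O(|\pi|^{1/2})$ error that is dominated by Part (a); the triangle inequality then delivers the first two inequalities of the lemma. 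The main obstacle is to ensure that the sharp $\kappa$-dependence $\alpha_1(\kappa)\in\{\kappa^{1/4},1\}$ of Part (a) is not lost when merged with the Euler error of Part (b), but since the two errors are of independent origin and simply add, this reduces to a routine application of the imported \cite{BBJC} ingredients.
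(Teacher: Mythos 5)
Your overall reduction (fix the truncation level at $n=M_z$ so that $(Y^{\mathcal{R}},Z^{\mathcal{R}})=(Y^{\mathcal{R},M_z},Z^{\mathcal{R},M_z})$, $Z^{\mathcal{R},e}=Z^{\mathcal{R},e,M_z}$, and everything lives in the Lipschitz framework of \cite{BBJC}) is exactly the paper's idea, and your Part (a) is the paper's proof of the third inequality. But the paper obtains the \emph{first two} inequalities by the same citation: \cite[Theorem 3.1]{BBJC} already compares the scheme (driven by $X^\pi$) with the discretely reflected BSDE driven by $X$, with the error controlled by $|\pi|^{1/2}$ terms plus $\Vert Z^{\mathcal R}-\bar Z^{\mathcal R}\Vert_{\mathbb H^2}$ and then by the regularity of $Z^{\mathcal R}$ via the next-reflection-time representation; that is precisely where the pairs $(\alpha_1,\alpha_2,\epsilon_1)$ come from. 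You instead reconstruct these two estimates through a triangle inequality with a new stability bound between $DR(f_{M_z},g,X)$ and $DR(f_{M_z},g,X^\pi)$, and it is this Part (b) that contains a genuine gap.

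Concretely, your interval-by-interval propagation does not deliver $\Vert Y^{\mathcal R}-Y^{\mathcal R,e}\Vert_{\mathbb S^2}^2+\Vert Z^{\mathcal R}-Z^{\mathcal R,e}\Vert_{\mathbb H^2}^2\leqslant C|\pi|$ with a constant independent of $\kappa$. The Gronwall factors across the $\kappa$ intervals are harmless (they multiply to $e^{CT}$), so "no geometric blow-up" is not the issue; the issue is the \emph{additive} correction at each reflection date. The $1$-Lipschitz property of $y\mapsto\max(y,g(x))$ gives $|Y^{\mathcal R}_{r_j}-Y^{\mathcal R,e}_{r_j}|\leqslant|\tilde Y^{\mathcal R}_{r_j}-\tilde Y^{\mathcal R,e}_{r_j}|+L|X_{r_j}-X^\pi_{r_j}|$, and iterating this backwards accumulates $\kappa$ terms of size $\Vert\delta X_{r_j}\Vert_{L^2}\sim|\pi|^{1/2}$; with $\mathcal R=\pi$ one has $\kappa\sim|\pi|^{-1}$, so the accumulated contribution is of order $\kappa|\pi|^{1/2}\sim|\pi|^{-1/2}$, which diverges (absorbing the additive terms with weights $(1+\eta)$, $\eta\sim\kappa^{-1}$, in the squared norm makes it even worse, of order $\kappa^2|\pi|$). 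To get a $\kappa$-uniform stability estimate for the discretely reflected pair one must treat the reflections globally rather than date by date --- e.g.\ write both discretely reflected BSDEs with their (discrete) increasing processes and use the inequality $\delta Y_s\,d\delta K_s\leqslant L|\delta X_s|\,d(K^1_s+K^2_s)$ together with moment bounds on $K^1_T+K^2_T$, exactly as in the continuous-reflection stability argument of Section 3 --- or simply avoid Part (b) altogether by citing \cite[Theorem 3.1]{BBJC} for the first two inequalities, as the paper does. As written, your proof establishes the third inequality but not the first two.
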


\begin{proof}
Keeping $(Y^{\mathcal{R}},Z^{\mathcal{R}})=(Y^{\mathcal{R},M_z},Z^{\mathcal{R},M_z})$ and $Z^{\mathcal{R},e}=Z^{\mathcal{R},e,M_z}$ in mind
and applying the main theorem in \cite{BBJC} under Lipschitz case to our truncated scheme (\ref{discheme}) and (truncated) discretely
reflected BSDE (\ref{Y^R})/(\ref{Y^Re}),
we can obtain the conclusion directly.
\end{proof}

\subsection{Approximation results for continuously reflected BSDEs}

We first recall our previous result about the convergence rate from discretely to continuously reflected
BSDE in \cite{DQ}. As mentioned before, one can readily verify that all the results therein still hold  when we replace $X$ by $X^\pi$ and
under the general driver $f$ involving $y$.

\begin{lemma}\label{YandYR}
Let $\mathrm{(HX)}$ and $\mathrm{(HF)}$ hold. Then,
\begin{align*}
\begin{split}
&\max_{ j \leqslant \kappa-1} \Vert\sup_{t\in[r_j,r_{j+1}]} |Y_t-Y^\mathcal{R}_t|\Vert_{\mathbb{L}^2}
+
\Vert Z-Z^{\mathcal{R}}\Vert_{\mathbb{H}^2}
\leqslant C|\mathcal{R}|^{\frac{1}{4}},\\
&\max_{ j \leqslant \kappa-1} \Vert\sup_{t\in[r_j,r_{j+1}]} |Y^e_t-Y^{\mathcal{R},e}_t|\Vert_{\mathbb{L}^2}
+
\Vert Z^e-Z^{\mathcal{R},e}\Vert_{\mathbb{H}^2}
\leqslant C|\mathcal{R}|^{\frac{1}{4}}.\\
\end{split}
\end{align*}
In addition, if Assumption $\mathrm{(H1)}$ holds, the index of convergence rate will become $\frac{1}{2}$.
\end{lemma}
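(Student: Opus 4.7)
The plan is to reduce Lemma \ref{YandYR} to the Lipschitz framework already settled in \cite{DQ} and then check that the extension to a generator depending on $y$ and to the Euler-driven system is painless. Since $Z^\mathcal{R}$ and $Z^{\mathcal{R},e}$ are both uniformly bounded by the deterministic constant $M_z$ (cf.\ the discussion following the scheme (\ref{discheme})), and since $Z*W$, $Z^e*W$ are BMO with norms controlled only by the structural constants (Proposition \ref{Z-BMO}), I would first replace $f$ by $f_N$ for an $N$ large enough that none of $(Y,Z,K)$, $(Y^e,Z^e,K^e)$, $(Y^\mathcal{R},Z^\mathcal{R})$, $(Y^{\mathcal{R},e},Z^{\mathcal{R},e})$ is affected, but the generator is now globally Lipschitz in $(x,y,z)$.

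The second step runs the argument of \cite{DQ} interval by interval on $[r_j,r_{j+1})$. Applying It\^o to $(Y_t-\tilde Y^\mathcal{R}_t)^2$ on such an interval and using the obstacle conditions $Y\ge g(X)$ and the jump relation $Y^\mathcal{R}_{r_{j+1}}=\tilde Y^\mathcal{R}_{r_{j+1}}\vee g(X_{r_{j+1}})$, one obtains a local estimate whose driving contribution is the $L^2$-oscillation of $t\mapsto g(X_t)-g(X_{r_j})$ on the interval. The new $y$-dependence of $f$ produces only a Lipschitz cross-term $L|Y-\tilde Y^\mathcal{R}|^2$, absorbed by Gronwall. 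Summation over $j$ and the representation of $Z^\mathcal{R}$ in terms of the next reflection time yield the global $|\mathcal{R}|^{1/4}$ rate on the $Y$ part and the corresponding $\mathbb H^2$ rate on $Z-Z^\mathcal{R}$; under (H1) the H\"older exponent of $g\circ X$ improves, which upgrades the rate to $|\mathcal R|^{1/2}$ exactly as in the original paper.

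The third step is to transfer all of the above to the $X^\pi$-driven system. The piecewise-constant Euler process $X^\pi$ is itself a solution of an SDE of the form (\ref{X}) whose coefficients satisfy (HX) with the same constant $L$, so Propositions \ref{Z-BMO} and \ref{L^p} give the identical a priori bounds on $Z^e*W$ and on $K^e_T$; the bound $M_z$ on $Z^{\mathcal{R},e}$ comes from the extension of \cite[Lemma 4.5]{DQ} already invoked in the text. Replacing $X$ by $X^\pi$ throughout the argument of step two then produces the second set of estimates verbatim, with the same constants and the same dichotomy between (H1) and (H2).

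The main obstacle I expect is the careful bookkeeping at step one: I must confirm that the BMO norm of $Z\cdot W$ and the exponential-martingale integrability employed in the linearization of the (truncated) generator stay uniform in $\mathcal R$ and in $\pi$, so that the Lipschitz constants entering the Gronwall bounds in step two are genuinely independent of these grids. Once this uniformity is in place, the local estimates aggregate exactly as in \cite{DQ} and the stated convergence rates follow without further work.
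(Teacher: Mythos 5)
Your overall plan coincides with the paper's, which in fact offers almost no proof here: it simply invokes \cite{DQ}, remarks that the arguments there go through verbatim when the generator depends on $y$ and when $X$ is replaced by $X^\pi$ (the latter being a special case of (\ref{X}) with piecewise constant coefficients satisfying (HX)), and obtains the rate $\frac{1}{2}$ under (H1) from \cite[Theorem 4.6]{DQ} plus a mollification of $g$. Your second and third steps are consistent with this.

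The genuine gap is in your first step. You truncate $f$ into $f_N$ and claim that for $N$ large none of $(Y,Z,K)$, $(Y^e,Z^e,K^e)$, $(Y^{\mathcal R},Z^{\mathcal R})$, $(Y^{\mathcal R,e},Z^{\mathcal R,e})$ is affected. This is justified only for the discretely reflected solutions, whose $Z$-components are bounded by $M_z$ uniformly in $\mathcal R$ (the extension of \cite[Lemma 4.5]{DQ} quoted in the text). For the \emph{continuously} reflected solutions the paper establishes only the BMO property of $Z*W$ and $Z^e*W$ and $L^p$-bounds on $\int_0^T|Z_t|^2dt$, not pointwise boundedness of $Z$ or $Z^e$; and deducing such a bound from $Z^{\mathcal R}\to Z$ would be circular, since that convergence is exactly what Lemma \ref{YandYR} asserts. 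The fix is to reorder: prove the estimates for the globally Lipschitz system with generator $f_{M_z}$, comparing $(Y^{\mathcal R},Z^{\mathcal R})=(Y^{\mathcal R,M_z},Z^{\mathcal R,M_z})$ with the continuously reflected solution $(Y',Z',K')$ of the $f_{M_z}$-equation; the uniform bound $|Z^{\mathcal R}|\leqslant M_z$ then passes to the limit, giving $|Z'|\leqslant M_z$, so $(Y',Z',K')$ solves the original quadratic equation and equals $(Y,Z,K)$ by uniqueness — or avoid truncation altogether and run the quadratic argument directly through the BMO linearization, as \cite{DQ} does. Two smaller inaccuracies: the ``representation of $Z^{\mathcal R}$ in terms of the next reflection time'' is the tool behind the regularity bound $\Vert Z^{\mathcal R}-\bar Z^{\mathcal R}\Vert_{\mathbb H^2}$ used in Lemma \ref{Quaresult}, not the mechanism for the discrete-to-continuous comparison here; and under (H1) the gain to $|\mathcal R|^{\frac12}$ does not come from better H\"older regularity of $g(X)$ (which remains of order $|\mathcal R|^{\frac12}$ in $L^2$) but from the semimartingale decomposition of $g(X_t)$ via It\^o's formula applied to a mollified $g$ — this is precisely the ``approximation argument'' the paper alludes to.
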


Note that the conclusion under Assumption (H1) can be obtained from \cite[Theorem 4.6]{DQ} by using an approximation argument as usual.
Finally, we present our main theorem of this section regarding the convergence result of the numerical scheme to continuously reflected BSDE
with quadratic growth and deterministic $\sigma$. In order to keep consistency between the two different convergence criteria appearing
in the above lemmas \ref{Quaresult} and \ref{YandYR}, we assume the reflection points and the partition points coincide, i.e. $\mathcal{R}=\pi$ (thus $\kappa=N$) in the
following theorem.

\begin{theorem}
Suppose $\mathrm{(HX)}$, $\mathrm{(HF)}$, $\mathrm{(HT)}$ and $\mathrm{(H1)}$ hold. Then, the following estimates hold with $q=\frac{1}{4}$:
\begin{align*}
\begin{split}
&\max_{ i \leqslant N-1} \Vert\sup_{t\in[t_i,t_{i+1}]} |Y_t-Y^{\pi}_t|+\sup_{t\in(t_i,t_{i+1}]}
|Y_t-\bar{Y}^{\pi}_{t_{i+1}}|\Vert_{\mathbb{L}^2}
\leqslant C|\pi|^q,\\
&\Vert Z-Z^{\pi}\Vert_{\mathbb{H}^2}+\Vert Z-\bar{Z}^{\pi}\Vert_{\mathbb{H}^2}
\leqslant C|\pi|^{\frac{1}{4}}.
\end{split}
\end{align*}
Moreover, under Assumption $\mathrm{(H2)}$, we have finer result for $Y$ part with $q=\frac{1}{2}$.
\end{theorem}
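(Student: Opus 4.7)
The strategy is a triangle-inequality decomposition in which each piece is controlled by an earlier result; the only delicate point is the choice of intermediate processes for $Z$, because the authors have already flagged (in the introduction and in the discussion preceding Lemma \ref{Quaresult}) that a naive insertion of $Z^\mathcal{R}$ would leave the unbounded-in-$N$ prefactor $\alpha_2(\kappa)|\pi|^{1/2}=(N|\pi|)^{1/2}$. I would instead route through $Z^e$ (the continuously reflected BSDE driven by the Euler process $X^\pi$) and then through $Z^{\mathcal{R},e}$.

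For the $Y$ component the natural single-step intermediate is $Y^\mathcal{R}$:
\begin{equation*}
\|\sup_{t\in[t_i,t_{i+1}]}|Y_t-Y^\pi_t|\|_{\mathbb{L}^2}\leq\|\sup_{t\in[t_i,t_{i+1}]}|Y_t-Y^\mathcal{R}_t|\|_{\mathbb{L}^2}+\|\sup_{t\in[t_i,t_{i+1}]}|Y^\mathcal{R}_t-Y^\pi_t|\|_{\mathbb{L}^2},
\end{equation*}
and similarly for $\bar Y^\pi_{t_{i+1}}$. Under (H1), Lemma \ref{YandYR} bounds the first summand by $C|\mathcal{R}|^{1/2}=C|\pi|^{1/2}$ (since $\mathcal{R}=\pi$), and Lemma \ref{Quaresult} bounds the second by $C(\alpha_1(\kappa)|\pi|^{1/2}+\epsilon_1(\pi))=C(N^{1/4}|\pi|^{1/2}+|\pi|^{1/4})$. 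Writing $N^{1/4}|\pi|^{1/2}=(N|\pi|)^{1/4}|\pi|^{1/4}\leq L^{1/4}|\pi|^{1/4}$ via the standing bound $N|\pi|\leq L$, the total is $C|\pi|^{1/4}$, which is the claim with $q=1/4$. Under (H2), Lemma \ref{Quaresult} sharpens to $(\alpha_1,\epsilon_1)=(1,|\pi|^{1/2})$, so both summands become $O(|\pi|^{1/2})$, yielding the announced refinement $q=1/2$.

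For the $Z$ component I would use the three-term decomposition
\begin{equation*}
\|Z-Z^\pi\|_{\mathbb{H}^2}\leq\|Z-Z^e\|_{\mathbb{H}^2}+\|Z^e-Z^{\mathcal{R},e}\|_{\mathbb{H}^2}+\|Z^{\mathcal{R},e}-Z^\pi\|_{\mathbb{H}^2},
\end{equation*}
and analogously for $\bar Z^\pi$. The first summand is where the quantitative stability Theorem \ref{YandYeZandZe} enters: applied with $X^1=X$ and $X^2=X^\pi$ (legitimate because the Euler SDE \eqref{X^pi} satisfies (HX) with piecewise constant coefficients), combined with the classical Euler bound $\|X-X^\pi\|_{\mathbb{S}^{4\bar{p}q^*}}\leq C|\pi|^{1/2}$ from \eqref{Eulerest}, it delivers $\|Z-Z^e\|_{\mathbb{H}^2}\leq C|\pi|^{1/4}$. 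The second summand is $C|\mathcal{R}|^{1/2}=C|\pi|^{1/2}$ by Lemma \ref{YandYR} under (H1). The third summand, thanks to the favorable prefactor $\alpha_1(\kappa)=\kappa^{1/4}$ that is afforded precisely by routing through $Z^{\mathcal{R},e}$ rather than $Z^\mathcal{R}$, is bounded by $C(\kappa^{1/4}|\pi|^{1/2}+\epsilon_2(\pi))=C((N|\pi|)^{1/4}|\pi|^{1/4}+|\pi|^{1/4})\leq C|\pi|^{1/4}$. Summing, $\|Z-Z^\pi\|_{\mathbb{H}^2}\leq C|\pi|^{1/4}$, and the same argument with the twin estimate in Lemma \ref{Quaresult} gives $\|Z-\bar Z^\pi\|_{\mathbb{H}^2}\leq C|\pi|^{1/4}$.

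The one genuine obstacle is the routing choice for $Z$: a naive single-step decomposition through $Z^\mathcal{R}$ picks up $\alpha_2(\kappa)|\pi|^{1/2}=(N|\pi|)^{1/2}$, which is a constant of order one under the constraint $\mathcal{R}=\pi$ and thus destroys convergence. Inserting $Z^e$ replaces this bad term by the forward-perturbation error $\|Z-Z^e\|_{\mathbb{H}^2}$, which is exactly the object controlled by the main stability Theorem \ref{YandYeZandZe}; the price is only the Euler error of $X$, contributing a harmless $|\pi|^{1/4}$. This is precisely why the quantitative stability result is the essential ingredient of the numerical analysis, and why the more direct Lipschitz-style argument used in \cite{BBJC} cannot be imported verbatim into the quadratic setting.
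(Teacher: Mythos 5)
Your proposal is correct and follows essentially the same route as the paper: the $Y$ part by combining Lemma \ref{YandYR} and Lemma \ref{Quaresult} with $\mathcal{R}=\pi$, and the $Z$ part by routing through $Z^e$ and $Z^{\mathcal{R},e}$, using Theorem \ref{YandYeZandZe} together with the Euler estimate (\ref{Eulerest}) to get $\Vert Z-Z^e\Vert_{\mathbb{H}^2}\leqslant C|\pi|^{1/4}$ and avoiding the degenerate $\kappa^{1/2}|\pi|^{1/2}$ term that a direct comparison with $Z^{\mathcal{R}}$ would produce. Your write-up simply makes explicit the bookkeeping (e.g.\ absorbing $\kappa^{1/4}|\pi|^{1/2}$ via $N|\pi|\leqslant L$) that the paper leaves implicit.
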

\begin{proof}\\
\noindent $Y$ part:
Lemma \ref{Quaresult} and \ref{YandYR} lead straightforward to the result for $Y$ with $\mathcal{R}=\pi$.\\

\noindent $Z$ part: As shown in Lemma \ref{Quaresult}, one cannot get the final convergence with only $Z^\mathcal{R}$ due to
the troublesome term $\kappa^{\frac{1}{2}} |\pi|^{\frac{1}{2}}$ on the righthand side of the estimate. Note that the problem cannot be resolved by simply increasing
the regularity assumption on $g$ and $\sigma$. Thus, we need to proceed with the help of $Z^{\mathcal{R},e}$.

Taking $(X^1,Y^1,Z^1)=(X,Y,Z)$, $(X^2,Y^2,Z^2)=(X^\pi,Y^e,Z^e)$ and applying the stability result in Theorem \ref{YandYeZandZe}, we obtain
from the estimate (\ref{Eulerest}) that
$$\Vert Z-Z^e\Vert^2_{\mathbb{H}^2}\leqslant C\Vert  X-X^\pi \Vert_{\mathbb{S}^{4\bar{p}q^*}}
\leqslant C |\pi|^{\frac{1}{2}},$$
where $\bar{p}$ and $q^*$ are given in Theorem 3.2. Then, the conclusion for the solution component $Z$ follows from the results related to $Z^{\mathcal{R},e}$
in Lemma \ref{Quaresult} and \ref{YandYR}. This completes the proof.
\end{proof}

\section{Conclusions}

In this paper, we propose a truncated
discrete-time numerical scheme for quadratic reflected BSDEs. To prove the convergence, we develop a quantitative stability result for the quadratic reflected BSDE, and then adapt the numerical analysis for quadratic BSDEs without reflection and Lipschitz BSDEs with reflection. One of the critical conditions is the deterministic assumption on the volatility term $\sigma$, which is imposed to guarantee
the uniform
boundedness for the solution component $Z^{\mathcal{R}}$ in the corresponding discretely reflected BSDE. A natural extension is to consider the multiplicative $\sigma$ by allowing it to depend on the underlying states. This is far more
challenging and the major difficulty is to obtain a uniform estimate for $Z^{\mathcal{R}}$ with regard to the discrete reflection $\mathcal{R}$. Such an extension is left for the future research.


\small

%
%
%
%
%
%
%

\end{document}